\documentclass{amsart}

\usepackage{color}
\usepackage{amssymb,mathtools}
\usepackage{booktabs}
\usepackage{graphicx}
\usepackage[hidelinks]{hyperref}
\usepackage[capitalize]{cleveref}
\numberwithin{equation}{section}

\theoremstyle{plain}
\newtheorem{theorem}{Theorem}[section]
\newtheorem{proposition}[theorem]{Proposition}
\newtheorem{lemma}[theorem]{Lemma}

\theoremstyle{remark}
\newtheorem{remark}[theorem]{Remark}

\DeclareMathOperator{\dist}{dist}
\DeclareMathOperator{\dom}{dom}
\DeclareMathOperator{\rank}{rank}

\newcommand{\R}{\mathbb R}

\newcommand{\Sp}{\mathfrak S}

\newcommand{\Ha}{\mathcal H_a}

\begin{document}

\title[No Spectral Invisibility for Dissipative Barrier Truncations]{No Spectral Invisibility for Dissipative Barrier Truncations in Any Dimension}

\author{Matthew J. Colbrook}
\address{Department of Applied Mathematics and Theoretical Physics, University of Cambridge, Cambridge CB3 0WA, United Kingdom}
\email{mjc249@cam.ac.uk}

\author{Marco Marletta}
\address{School of Mathematics, Cardiff University, Senghennydd Road, Cathays, Cardiff CF24 4AG, United Kingdom}
\email{marlettam@cardiff.ac.uk}

\subjclass[2020]{35J10, 47A10, 47B44, 46N40, 47-08, 65J10, 65N25}

\date{\today}

\keywords{Schr\"odinger operators, dissipative operators, finite sections, domain truncation, spectral approximation, spectral pollution, spectral invisibility}

\begin{abstract}
The dissipative barrier method suppresses spectral pollution, but whether it can itself conceal genuine spectral points has remained open. Known as the graveyard problem in computational spectral theory, the higher-dimen\-sional case has remained unresolved for more than a decade. We resolve it for Schr\"{o}dinger operators in dimensions $d\geq2$; together with the known one-dimensional theorem, this settles the no-invisibility problem in all dimensions. Let $A=-\Delta+V$ be a Dirichlet Schr\"odinger operator on a connected open set $\Omega\subseteq\mathbb R^d$ with $V\in L^1_{\rm loc}(\Omega)$ bounded below, and set $H=A+iS$, where $S\geq0$ and $S\in L^p(\Omega)$, with $1<p<\infty$ for $d=2$ and $d/2\leq p<\infty$ for $d\geq3$. Let $(\Omega_R)_{R>0}$ be a nested family of nonempty connected bounded open sets such that $\Omega_R\nearrow\Omega$ as $R\nearrow+\infty$. Denote by $H_R$ the Dirichlet truncation of $H$ to $\Omega_R$. We prove that every spectral point of $H$ is detected by the truncations: for every $\lambda\in\sigma(H)$ and every neighborhood $U$ of $\lambda$, $\sigma(H_R)\cap U\neq\emptyset$ for sufficiently large $R$. Equivalently, $\sigma(H)\subseteq\liminf_{R\to\infty}\sigma(H_R)$. Thus the barrier method does not trade suppression of spectral pollution for spectral invisibility. No regularity of $\partial\Omega$ is required, and the assumptions reach the critical Sobolev scale. The proof combines compactness of the dissipative form perturbation, Cwikel-type Schatten estimates for Birman–Schwinger operators, generalized strong resolvent convergence, and a reverse Hansmann--Weyl spectral-variation inequality due to Gil’. A two-dimensional numerical example illustrates the absence of spectral invisibility for the truncated dissipative operators.
\end{abstract}

\maketitle

\section{Introduction}

There are two ways a spectral computation can lie. It can see spectral points that are not there, or fail to see genuine spectral points that are. The first phenomenon is spectral pollution: spectral points of finite-dimensional or truncated approximants accumulate outside the spectrum of the target operator. The second is spectral invisibility: points of the target spectrum are missed by these limiting approximations.\footnote{For a family of approximate operators $T_R$, absence of spectral invisibility means the lower spectral inclusion
$\sigma(T)\subseteq \liminf_{R\to\infty}\sigma(T_R)$, where $z\in\liminf_{R\to\infty}\sigma(T_R)$ if every neighborhood of $z$ meets $\sigma(T_R)$ for all sufficiently large $R$. Spectral pollution concerns the opposite inclusion, namely accumulation points of $\sigma(T_R)$ lying outside $\sigma(T)$.} These are not rounding errors, but failures of the approximation scheme itself  \cite{rappaz1977approximation,Levitin,davies2004spectral,colbrook2019compute,colbrook3}, \cite[Chapter 1]{colbrook2026infinite}.

For unbounded self-adjoint operators, especially in gaps of the essential spectrum, spectral pollution has been studied extensively \cite{rappaz1997spectral,MR2947291,lewin2009spectral,perschrodinger,MR1672271}. For example, it occurs in finite element approximations of Maxwell's eigenvalue problem but can be avoided using edge elements; see Boffi's review \cite[Section 20]{boffi2010finite}. This success motivated the development of discrete differential forms \cite{bossavit1988whitney,hiptmair2002finite} and the finite element exterior calculus \cite{arnold2006finite,arnold2010finite,arnold2018finite}. The non-self-adjoint problem is even more treacherous: there is no variational principle, and, moreover, one has to deal with pseudospectral effects \cite{trefethen2005spectra}.

The essential numerical range characterizes regions of possible spectral pollution, even in the non-self-adjoint setting. Its role in spectral approximation goes back to \cite{pokrzywa1981method}, and the corresponding framework for unbounded operators was developed in \cite{bogli2020essential}. Compression methods can pollute anywhere in the essential numerical range $W_{\mathrm{e}}(T)$. For domain truncation methods for PDEs on unbounded domains, pollution is contained in $W_{\mathrm{e}}(T)$ under mild hypotheses. Unfortunately, there is currently no analogous set that characterizes spectral invisibility.

A classical idea to improve spectral approximation for domain truncation is to avoid imposing a reflecting wall at the artificial boundary. The best-known realization is the perfectly matched layer (PML). In the frequency domain, PML can be viewed as a form of complex scaling, whose rigorous spectral theory goes back to Aguilar--Combes and Balslev--Combes \cite{aguilar1971class,balslev1971spectral}; the PML terminology and method originate with B\'erenger \cite{berenger1994perfectly}. PML is most often used for whole-space and exterior-domain problems, but it can be combined with half-space matching to treat more complicated geometries; see Bonnet-Ben Dhia et al. \cite{MR4366123}.

PML and complex scaling replace the original problem by a non-self-adjoint one in which the continuous spectrum is pulled away from the eigenvalues of interest; often the essential numerical range is displaced as well. These eigenvalues then become visible and can be computed without local pollution. The dissipative barrier method proposed in \cite{marletta2012eigenvalues} and developed abstractly in \cite{MR3255484} follows this philosophy: move the bad set rather than trust a reflecting truncation. These techniques are used in computational chemistry and have also been used to study quantum mechanical resonances \cite{nonnenmacher2009quantum,nonnenmacher2015decay} and Anderson-type localisation \cite{MR4550318}.

But this raises a sharp question: can a cure for pollution create invisibility? Could it hide, for example, part of the essential spectrum? In the self-adjoint case, under standard hypotheses, domain truncation does not exhibit this pathology, and the usual proofs rely on variational or Weyl sequence arguments. For non-normal operators that arise from these techniques, there is no obvious substitute.

\subsection{The graveyard problem}

In 2012, the second author and Naboko considered this question for dissipative Schr\"odinger operators
$$
H=-\Delta+V+iS\quad\text{in }L^2({\mathbb R}^d),
$$
where $V$ is real-valued, $-\Delta+V$ is self-adjoint, and $S\geq 0$ is sufficiently small at infinity to ensure
$
\sigma_{\rm ess}(H)=\sigma_{\rm ess}(-\Delta+V).
$
They proved in \cite{marletta2014finite} that, when $d=1$ and $S\in L^1\cap L^\infty$, domain truncation of $H$ cannot cause spectral invisibility. They also observed that the result should extend to higher dimensions and to $S\in L^p$, $p>1$, but that extension did not follow from their proof. The obstruction was concrete: part of the argument evaluated a resolvent kernel on the diagonal. This can be made to work in one dimension, but not higher.

A different door opened with Hansmann's 2013 theorem \cite{MR3054310}. Let $B$ be bounded and self-adjoint, let $C=B+K$, and suppose that $K$ lies in the Schatten class ${\mathfrak S}_r$ for some $r>1$. Then
\begin{equation}\label{hans_eq}
\sum_{\lambda\in\sigma_{\rm d}(C)} [\operatorname{dist}(\lambda,\sigma(B))]^r\leq C_r\|K\|_{{\mathfrak S}_r}^r.
\end{equation}
Here the eigenvalues in the discrete spectrum $\sigma_{\rm d}(C)$ are counted with algebraic multiplicity, and $C_r$ depends only on $r$, not on $B$, $C$, or the Hilbert space. The estimate in \cref{hans_eq} has become one of the basic tools behind non-self-adjoint 
Lieb-Thirring and related inequalities \cite{MR3016473,MR3521694,MR4525757,MR3177329}.

Marletta and Naboko recognized that Hansmann's theorem had the right scale. The intended route was to pass from $-\Delta+V$ and $H$ to bounded Birman--Schwinger operators $B$ and $C$. The barrier would satisfy the Sobolev-form condition $S\in L^p$, with $p\geq d/2$ for $d\geq3$ and $p>1$ for $d=2$. Truncating $-\Delta+V$ and $H$ on domains $\Omega_n\nearrow{\mathbb R}^d$ would then give pairs $B_n,C_n$, to which Hansmann's estimate could be applied with constants independent of $n$.

However, there is a catch. Hansmann's theorem measures how far eigenvalues of $C$ can move from $\sigma(B)$. The no-invisibility problem asks for the opposite protection: not that $C_n$ has too many eigenvalues, but that it cannot lose the right ones. If eigenvalues are missing, the corresponding terms disappear from the left-hand side of Hansmann's inequality; the estimate becomes easier, not harder, to satisfy. Thus what is needed is a Hansmann-type estimate with the roles of $B$ and $C$ reversed.

For several years, the suspected absence of spectral invisibility for dissipative Schr\"odinger operators has been discussed with experts on CLR bounds and the wider spectral community. The open question was whether the dissipative barrier truncations $H_R$ satisfy
$$
        \sigma(H)\subseteq\liminf_{R\to\infty}\sigma(H_R)
$$
in dimensions $d\geq2$ under the natural Sobolev-scale assumption on the absorbing potential $S$. The problem came to be known as the \textit{graveyard problem} in computational spectral theory: natural, important, and close enough to tempt experienced analysts, but with every serious approach dying at the same point. It was the sort of problem that could bury months of work and leave no theorem.

In this paper, we solve the graveyard problem and prove that dissipative barriers do not create spectral invisibility.

\subsection{Main Result}

Let $d\geq2$, and let $\Omega\subseteq\R^d$ be a nonempty connected open set. We are mainly interested in the cases in which $\Omega$ is unbounded, but our results cover bounded $\Omega$ too. We do not assume that $\partial\Omega$ has any regularity. Let $V\in L^1_{\rm loc}(\Omega)$ be real-valued and such that, for some $c_0\geq0$,
$
        V\geq -c_0
$
almost everywhere. By a shift of spectral parameter we may assume without loss of generality that $V \geq 1$.

Let $A=-\Delta+V$ be the self-adjoint Dirichlet Schr\"odinger operator\footnote{Writing $A=-\Delta+V$ is a common abuse of notation; we shall be clear when this becomes important. Here the hypothesis $V\in L^1_{\mathrm{loc}}$ is enough to ensure that $\dom a$ contains
$C_c^\infty$ functions. For the expression $-\Delta u + Vu$ to define an element of $L^2$ for $u\in C_c^\infty(\Omega)$
one needs $V\in L^2_{\mathrm{loc}}$.}
 defined by the closed, densely defined form
$$
 a[u,v]=\int_\Omega \nabla u\cdot\overline{\nabla v}
       +\int_\Omega Vu\overline v,
 \qquad
 \dom a=\{u\in H_0^1(\Omega):V^{1/2}u\in L^2(\Omega)\},
$$
and let $\| \cdot \|_a$ be the associated form norm
\begin{equation}\label{eq:shifted-form-norm}
 \|u\|_a^2=a[u,u] \geq \| u \|_{H^1_0(\Omega)}^2.
\end{equation}
In particular 
\begin{equation}\label{eq:domAhalf}
 \dom a = \dom(A^{\frac{1}{2}}), \;\;\; a[u,u] = \| A^{\frac{1}{2}}u \|_{L^2(\Omega)}^2, \;\;\; u \in\dom a. 
\end{equation}
Let $S\geq0$, $S\in L^p(\Omega)$, where
\begin{equation}\label{eq:exponent-range}
        1<p<\infty\quad(d=2),\qquad
        d/2\leq p<\infty\quad(d\geq3).
\end{equation}
Let
$$
        h[u,v]=a[u,v]+i\int_\Omega S u\overline v,
        \qquad \dom h=\dom a .
$$
The fact that $\sqrt{S}A^{-\frac{1}{2}}$ is compact, see \cref{lem:cwikel} below, implies that $h$ is closed and sectorial.
By Kato's first representation theorem for sectorial forms \cite[Chapter VI, Section 2]{kato2013perturbation}, $h$ defines an $m$-sectorial operator $H$; with the usual abuse of notation, $H=-\Delta + V + iS = A + iS$.

Let $(\Omega_R)_{R>0}$ be a nested family of nonempty connected bounded open sets such that $\Omega_R\nearrow\Omega$ as $R\nearrow+\infty$; for instance, we might choose a fixed point $y_0\in\Omega$ and let $\Omega_R=\Omega\cap \{y\in{\mathbb R}^d: |y-y_0|<R\}$, if these are connected.
Let
$$
 a_R[u,v]=\int_{\Omega_R}\nabla u\cdot\overline{\nabla v}
          +\int_{\Omega_R}Vu\overline v,\qquad \dom a_R=\{u\in H_0^1(\Omega_R):
        V^{1/2}u\in L^2(\Omega_R)\},
$$
and define
$$
 h_R[u,v]=a_R[u,v]+i\int_{\Omega_R}Su\overline v.
$$
Due to the Dirichlet boundary conditions on $\partial\Omega_R$, elements of the form domains $\dom a_R = \dom h_R$ 
extend by zero to elements of $\dom a = \dom h$. Hence $a_R$ and the associated positive self-adjoint $A_R$ satisfy
\begin{equation}\label{eq:shifted-form-norm-R}
a_R[u,u] = \| A_R^{1/2}u \|_{L^2(\Omega_R)}^2 \geq \| u \|_{H^1_0(\Omega_R)}^2, \;\;\;
 u \in \dom a_R = \dom(A_R^{1/2}).
\end{equation}
The $h_R$ are uniformly sectorial, as restrictions of $h$, and we denote by $H_R$ the associated $m$-sectorial operators
in $L^2(\Omega_R)$.

The following theorem solves the graveyard problem.

\begin{theorem}\label{thm:main}
For every $\lambda\in\sigma_{\mathrm{ess}}(H)$ and every open neighborhood $U$ of $\lambda$,
$$
        \sigma(H_R)\cap U\neq\emptyset
        \qquad\hbox{for all sufficiently large }R .
$$
The same is guaranteed for eigenvalues $\lambda\in\sigma_{\rm d}(H)$ if $H$ enjoys the unique continuation
principle, for instance if $V\in L^p_{\mathrm{loc}}(\Omega)$ for $p$ satisfying \eqref{eq:exponent-range}.
\end{theorem}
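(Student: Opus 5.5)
We argue by contradiction. Fix $\lambda\in\sigma_{\rm ess}(H)$ and suppose there are $\varepsilon>0$ and a sequence $R_n\to\infty$ with $\sigma(H_{R_n})\cap B(\lambda,\varepsilon)=\emptyset$. The plan is to move everything to a bounded setting through a Birman--Schwinger transformation. There we apply the reverse Hansmann--Weyl inequality of Gil' with $H_{R_n}$ playing the role of the unperturbed operator, and the roles of $A$ and $H$ reversed relative to \eqref{hans_eq}.

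\textbf{Step 1: bounded reformulation.} We write $H=A^{1/2}(I+iK)A^{1/2}$ with $K=A^{-1/2}SA^{-1/2}=(\sqrt S A^{-1/2})^*(\sqrt S A^{-1/2})$. Since $\sqrt S\in L^{2p}$, the Cwikel-type estimate of \cref{lem:cwikel} places $\sqrt S A^{-1/2}$ in a Schatten class $\Sp_{r}$ for some finite $r$ (weak-type at the endpoint $p=d/2$; any $r'>r$ then suffices). Hence $K\in\Sp_{r/2}$, and likewise $K_R=A_R^{-1/2}SA_R^{-1/2}$, with $\|K_R\|_{\Sp_{r/2}}$ bounded uniformly in $R$. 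This is because $V\ge1$ and form domination by $H^1_0$ give uniform constants under extension by zero. For the spectral parameter $z$ we use the identity
\[
H-z = A^{1/2}\bigl(I+iK-zA^{-1}\bigr)A^{1/2}.
\]
Thus $z\in\sigma(H)$ iff $0\in\sigma(T(z))$, where $T(z)=I-zA^{-1}+iK$ is a bounded operator pencil. The analogous statement holds for $T_R(z)$ on $L^2(\Omega_R)$, extended by zero and by $I$ on the complement, so that all operators act on one Hilbert space.

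\textbf{Step 2: uniform resolvent control and convergence.} We consider the self-adjoint part $B(z)=I-(\Re z)A^{-1}$ (and similarly $B_R$), and choose a fixed point $\mu$ far out in the sector, off the numerical ranges. By monotone convergence of forms, $A_R\to A$ in generalized strong resolvent sense. Together with compactness of $\sqrt S A^{-1/2}$, this gives $(H_R-\mu)^{-1}P_R\to(H-\mu)^{-1}$ in norm. The reason is that the dissipative part is a compact form perturbation and strong convergence times compact gives norm convergence. Combining the first resolvent identity with the assumption that $B(\lambda,\varepsilon)$ lies in the resolvent set of every $H_{R_n}$, we then want a uniform bound
\[
\sup_n\sup_{|z-\lambda|<\varepsilon/2}\|(H_{R_n}-z)^{-1}\|<\infty .
\]
Given this bound, a standard argument gives $z\notin\sigma(H)$ for such $z$: the resolvents are uniformly bounded and converge strongly, so the limit inverts $H-z$. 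That would contradict $\lambda\in\sigma(H)$.

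\textbf{Step 3 (main obstacle): the uniform resolvent bound.} Spectral absence alone does not bound the resolvent of a non-normal operator, and this is the step I expect to be hardest. I plan to use Gil's inequality, which bounds $\|(C-z)^{-1}\|$ for $C=B+K$ with $B$ self-adjoint (or normal) and $K\in\Sp_r$ by an explicit function $\frac{1}{\delta}\exp\bigl(c\,\|K\|_{\Sp_r}^r/\delta^r\bigr)$. Here $\delta$ is the distance from $z$ to the spectrum of $C$, and the bound does not involve the distance from $z$ to the spectrum of $B$. I would apply it to the bounded Birman--Schwinger operators built from $A_{R_n}^{-1}$ and $K_{R_n}$. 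Two consequences follow for all $|z-\lambda|<\varepsilon/2$: the distance to the spectrum is at least $\varepsilon/2$ after the Möbius change of variables $z\mapsto (z-\mu)^{-1}$, and the Schatten norms are bounded uniformly in $n$. Together these give the required uniform bound.

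For discrete eigenvalues the same scheme fails only if $\lambda$ is an isolated point of $\sigma(H)$. In that case we take the Riesz projection $P$ and use norm resolvent convergence on a small circle around $\lambda$ to get Riesz projections $P_{R_n}$ converging to $P$ in norm. These are nonzero for large $n$ provided $P\ne0$ is detected by the truncations. That fails only if eigenfunctions are invisible to the approximation, which unique continuation excludes: an eigenfunction cannot vanish on the open set $\Omega_R$ without vanishing everywhere, since $\Omega$ is connected.
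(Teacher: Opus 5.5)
Your argument for $\lambda\in\sigma_{\rm ess}(H)$ works, but it uses a genuinely different mechanism from the paper's.

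\textbf{How the two routes differ.} The paper never bounds a resolvent of $H_{R_n}$. It applies Gil's spectral-variation theorem, an $\ell^r$ matching of the eigenvalues of $C_n=H_{R_n}^{-1}$ with those of $B_n=A_{R_n}^{-1}$, and reaches a counting contradiction because $\rank{\bf 1}_I(A_{R_n})\to\infty$ by \cref{lem:sconverge}. You instead turn absence of spectrum into a uniform resolvent bound and pass that bound to the limit.

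\textbf{The resolvent estimate you need.} It is a Carleman-type bound, not the Gil' result the paper uses. Take compact $C$ with $\operatorname{Im}C\in\Sp_r$, $r>1$. Use the triangular form $C=D+V$, where $D$ is diagonal with $\sigma(D)\subseteq\sigma(C)$ and $V$ is Volterra. Macaev's theorem gives $\|V\|_{\Sp_r}\leq c_r\|\operatorname{Im}C\|_{\Sp_r}$. Carleman's inequality for the quasinilpotent $(D-w)^{-1}V$ then gives $\|(C-w)^{-1}\|\leq\delta^{-1}\exp(a_r\|\operatorname{Im}C\|_{\Sp_r}^r\delta^{-r}+b_r)$ with $\delta=\dist(w,\sigma(C))$.

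\textbf{Fixes needed to apply it.}
First, take $\mu$ real, e.g.\ $\mu=0$ as in the paper. Then $B_n$ is self-adjoint and $\operatorname{Im}C_n=\operatorname{Im}K_n$ is bounded in $\Sp_r$ uniformly by \cref{lem:kr}. For non-real $\mu$, $\|\operatorname{Im}(A_{R_n}-\mu)^{-1}\|_{\Sp_r}$ blows up by \eqref{eq:count}.
Second, the M\"obius map does not preserve the distance $\varepsilon/2$. It does give a uniform $\eta>0$, exactly as in \eqref{eq:ess-separation}. Then $(H_{R_n}-z)^{-1}=-z^{-1}(C_n-z^{-1})^{-1}C_n$ is uniformly bounded for $|z-\lambda|<\varepsilon/2$.
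Third, ``the limit inverts $H-z$'' also needs generalized strong resolvent convergence $H_{R_n}^*\to H^*$, proved as in \cref{lem:sconverge} with the adjoint form. Approximating $v=H^{-1}g$ by $H_{R_n}^{-1}Q_{R_n}g$ gives $\|v\|\leq M\|(H-z)v\|$. The adjoint version gives dense range, so $z\in\rho(H)$.

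\textbf{Two parts of the proposal should go.}
The claim $(H_R-\mu)^{-1}P_R\to(H-\mu)^{-1}$ in norm is false. Every truncated resolvent is compact, while $(H-\mu)^{-1}$ is not, since $\sigma_{\rm ess}(H)\neq\emptyset$. Compactness of $\sqrt S A^{-1/2}$ says nothing about the unperturbed parts $J_RA_R^{-1}Q_R$.

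Your discrete-spectrum paragraph relies on this claim, through norm convergence of Riesz projections, so it fails as written. It also misdescribes the role of unique continuation. In the paper, unique continuation forces $\operatorname{Im}\lambda>0$, so that $\lambda\notin W_{\rm e}(H)=W_{\rm e}(A)\subseteq\R$, which \cite[Thm.~5.4]{bogli2020essential} requires.

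You do not need that paragraph. Contrary to what you say, Steps 1--3 never use $\lambda\in\sigma_{\rm ess}(H)$, only $\lambda\in\sigma(H)\subseteq\{\operatorname{Re}z\geq1\}$. The conclusion $B(\lambda,\varepsilon/2)\subseteq\rho(H)$ contradicts $\lambda\in\sigma(H)$ just as well when $\lambda$ is an isolated eigenvalue.

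\textbf{What each route buys.} The paper's counting route needs only the self-adjoint spectral projections of $A_{R_n}$ and one variation inequality. However, it is an essential-spectrum argument by nature, and it must import the abstract theory of \cite{bogli2020essential} for eigenvalues; that is where unique continuation enters. Your resolvent route costs the triangular-representation machinery and convergence of the adjoints. In return it treats all of $\sigma(H)$ at once. Repaired as above, it gives $\sigma(H)\subseteq\liminf_{R\to\infty}\sigma(H_R)$ without the unique continuation hypothesis.
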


The theorem says that for every $\lambda\in\sigma(H)$ and every tolerance $\varepsilon>0$, all sufficiently large computational domains have at least one spectral point of $H_R$ within distance $\varepsilon$ of $\lambda$. In an actual computation one first replaces the unbounded problem by a problem on a finite computational domain, and then discretizes the finite-domain operator, for instance by finite elements, finite differences, or spectral methods; see \cref{sec:num_example} for an example. \cref{thm:main} concerns the first step.

\begin{remark}[Essential versus discrete spectra]
The proof of the statement in Theorem \ref{thm:main} concerning $\lambda$ in the discrete spectrum $\sigma_{\mathrm{d}}(H)$ 
will be obtained by using the abstract result \cite[Thm. 5.4 i)]{bogli2020essential}, following methods very similar to those used to prove \cite[Theorem 7.1 iii)]{bogli2020essential} for even-order elliptic operators with bounded coefficients. This strategy
works because the unique continuation principle 
\cite{JerisonKenig1985,SchechterSimon1980} ensures that eigenvalues of $H$ lie strictly in the upper half-plane, while the essential numerical range $W_{\mathrm{e}}(H)$ is equal to $W_{\mathrm{e}}(A)$, and hence real. The important contribution in Theorem \ref{thm:main} is therefore the case $\lambda\in \sigma_{\mathrm{ess}}(H)$.
\end{remark}

\begin{remark}[Easy extensions]
The condition that $V$ be bounded below can be removed as long as the negative part $V_{-}$ of $V$ lies in $L^p$, where $p$ satisfies (\ref{eq:exponent-range}), and $C_{p,d}\| V_{-} \|_{p} < 1$, where $C_{p,d}$ is the Cwikel constant appearing in \cref{lem:sobolev-form-bound} below. This ensures that (after a shift of spectral parameter, if necessary) $a[u,u] \geq c \| u \|_{H^1_0}^2$ for some $c>0$. The result also extends to higher-order operators; see \cref{rem:ho-basic}.
\end{remark}

\subsection{Route to the Result}\label{sec:route}

The proof can be viewed as an eigenvalue-counting stability argument for the computational domains $\Omega_R$. Near a point of the essential spectrum of the self-adjoint operator $A$, the operators $A_R$ have increasingly many eigenvalues. The dissipative barrier perturbs the corresponding resolvents by a uniformly Schatten-controlled non-self-adjoint term. A reversed Hansmann inequality then prevents all of these eigenvalues from being transported away from the neighborhood. This is the mechanism ruling out spectral invisibility.

The missing ingredient to prove \cref{thm:main} is supplied by two papers of Gil' from 2024 \cite{MR4764540,MR4722454}. Taken together, they give precisely the reversed form of Hansmann's estimate needed here. We found these papers by using MathSciNet to search for papers citing Hansmann's original article \cite{MR3054310}.

The reason for making that search is also worth recording. The first author (MJC) had built a system of ChatGPT-based agents to propose, test, and attack possible proofs of the no-invisibility theorem. After a number of false starts, including arguments with real gaps and arguments relying on results which do not exist, the agents converged on the strategy just described -- which Marletta and Naboko had already dismissed for lack of a reversed form of Hansmann's estimate. The final step consisted of two lemmas which, if true, would have implied a crude version of \cref{hans_eq} with $A$ and $B$ reversed. We did not trust those lemmas. But it seemed likely that the idea was a distorted echo of human-written mathematics in the training data. We therefore looked for the uncited human source. We did not find exactly that. Instead, we found the stronger results of Gil' \cite{MR4764540,MR4722454}, which give a cleaner proof.

\subsection{Notation and preliminaries}

For a compact operator $K$, let $\mu_1(K)\geq\mu_2(K)\geq\cdots$ denote its singular values, enumerated by multiplicity. For $1\leq s<\infty$, the Schatten class $\Sp_s$ is the set of compact $K$ whose singular values form a sequence in $\ell^s({\mathbb N})$, 
with norm $\| \cdot \|_{\Sp_s}$ defined by
$$
        \|K\|_{\Sp_s}^s=\sum_{j=1}^\infty \mu_j(K)^s 
$$
The weak Schatten class $\Sp_{s,\infty}$ consists of those compact $K$ such that
$$
        \|K\|_{s,\infty}^s=\sup_{\kappa>0}\kappa^s \operatorname{rank}{\bf 1}_{(\kappa,\infty)}(|K|)<\infty.
$$ 
We shall also use the Marcinkiewicz interpolation inequality \cite{MR80887} for
a bounded operator $K$ in a Schatten scale:
\begin{equation}\label{eq:weak-schatten-interpolation}
        \|K\|_{\Sp_t}^t\leq {\frac{t}{t-s}}\|K\|^{t-s}\|K\|_{s,\infty}^s,
        \qquad t>s .
\end{equation}

In \cref{sec:prelim} we recall the proof that the dissipative barrier $iS$ is form compact relative to $A$ so
that, in particular\footnote{Throughout, $\sigma_{\rm ess}$ denotes the index-zero Fredholm essential spectrum: $z\notin\sigma_{\rm ess}(T)$ means that $T-z$ is Fredholm of index zero. There are at least five sensible definitions of essential spectra \cite[Chapters I and IX]{edmunds1987spectral}. They all coincide for the operators considered in this paper.}
$
        \sigma_{\rm ess}(H)=\sigma_{\rm ess}(A).
$
We also prove Schatten class bounds for various Birman--Schwinger-type operators measuring the barrier, using a theorem of Cwikel. Although some of these proofs are standard, extra care is needed to show that the constants appearing in the inequalities can be chosen to be the same for the domains $\Omega_R$ as for $\Omega$. 

In \cref{sec:BS_trace} we establish uniform bounds on generalized resolvent differences:
$$
     \sup_{R>0} \|   H_R^{-1}-A_R^{-1} \|_{\Sp_r} < \infty, \qquad r>p.
$$
We also prove generalized strong resolvent convergence of $A_R$ to $A$. Quite general results of this type are known in various contexts \cite[Section 7]{bogli2020essential}. This implies that eigenvalues of $A_R$ must accumulate near $\sigma_{\mathrm{ess}}(A)$, which will be important.

In \cref{sec:spec_var}, we use Gil's theorem, which gives a reversed Hansmann-type estimate: a Schatten perturbation of a compact self-adjoint operator cannot make too many eigenvalues disappear. Finally, \cref{sec:final_proof} combines this with the standard self-adjoint spectral inclusion for the Dirichlet sections. Near a point of $\sigma_{\rm ess}(A)$, the self-adjoint sections have arbitrarily many eigenvalues. Gil's estimate implies that the dissipative sections cannot lose more than finitely many of these eigenvalues. That is the no-invisibility mechanism and allows us to prove \cref{thm:main}. \cref{sec:num_example} concludes with an illustrative example in two dimensions.

\section{Schatten class bounds}
\label{sec:prelim}

The estimates in this section are uniform in $R$, which allows the family $H_R$ to be interpreted as a stable sequence of computational problems. Let $p'=p/(p-1)$ ($\frac{1}{p}+\frac{1}{p'}=1$) and set $q=2p'$. If $d\geq3$ and $p\geq d/2$, then $q\leq 2d/(d-2)$. If $d=2$ and $p>1$, then $2\leq q<\infty$. The following lemma collects some standard facts.

\begin{lemma}[Sobolev form bound]\label{lem:sobolev-form-bound}
There is a constant $C_{d,p}$ such that for all $u,v\in\dom a$,
\begin{equation}\label{eq:2.1}
 \int_\Omega S |u|^2 \leq \|S\|_p\|u\|_{2p'}^2 \leq C_{d,p}\|S\|_p\|u\|_{H^1_0(\Omega)}^2  \leq C_{d,p}\|S\|_p\|u\|_a^2.
 \end{equation}
The same estimate holds on every $\Omega_R$ with the same constant $C_{d,p}$.
\end{lemma}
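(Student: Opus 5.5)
The plan is to prove the three inequalities in \eqref{eq:2.1} one at a time, each by a standard tool, and to track the constants so that nothing depends on the domain.

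\textbf{First inequality.} This is Hölder's inequality with exponents $p$ and $p'$ applied to $S\cdot|u|^2$:
$$\int_\Omega S|u|^2\le \|S\|_p\,\||u|^2\|_{p'}=\|S\|_p\|u\|_{2p'}^2 .$$
It holds for every measurable $u$ with $u\in L^{2p'}$, and it uses nothing about $\Omega$.

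\textbf{Second inequality.} This is the Sobolev embedding $H^1_0(\Omega)\hookrightarrow L^q(\Omega)$ with $q=2p'$, applied through extension by zero. Every $u\in H^1_0(\Omega)$ is an $H^1$-limit of $C_c^\infty(\Omega)$ functions. Its zero extension $\tilde u$ therefore lies in $H^1(\R^d)$, with $\|\tilde u\|_{H^1(\R^d)}=\|u\|_{H^1_0(\Omega)}$ and $\|\tilde u\|_{L^q(\R^d)}=\|u\|_{L^q(\Omega)}$. It thus suffices to have a whole-space embedding $\|w\|_{L^q(\R^d)}\le C\|w\|_{H^1(\R^d)}$ with $C=C(d,q)$.

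\begin{itemize}
\item For $d\ge3$: $q\in[2,2d/(d-2)]$ because $p\ge d/2$ and $p<\infty$. Interpolate between $L^2$ and the Gagliardo--Nirenberg--Sobolev inequality $\|w\|_{2^*}\le C_d\|\nabla w\|_2$ via $\|w\|_q\le\|w\|_2^{1-\theta}\|w\|_{2^*}^{\theta}$. Young's inequality then gives the bound in terms of the full $H^1$ norm.
\item For $d=2$: $q\in[2,\infty)$ since $p>1$, and the embedding $H^1(\R^2)\hookrightarrow L^q$ holds for every finite $q$. This can be seen, for example, from Ladyzhenskaya/Gagliardo--Nirenberg $\|w\|_q\le C_q\|w\|_2^{2/q}\|\nabla w\|_2^{1-2/q}$, or by Fourier/Hausdorff--Young.
\end{itemize}

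Note the norm on $H^1_0$ here is the full $H^1$ norm $\|u\|_2^2+\|\nabla u\|_2^2$. This is the norm \eqref{eq:shifted-form-norm} compares with, so no Poincaré inequality (which would be domain dependent) is needed. The constant $C_{d,p}$ is the square of the whole-space embedding constant, so it is manifestly independent of $\Omega$.

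\textbf{Third inequality.} This is exactly \eqref{eq:shifted-form-norm}. Since $V\ge1$, we have
$$a[u,u]=\|\nabla u\|_2^2+\int V|u|^2\ge\|\nabla u\|_2^2+\|u\|_2^2 .$$

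\textbf{Uniformity in $R$.} For $\Omega_R$, the same chain applies verbatim with $\Omega_R$ in place of $\Omega$. We use $\|S|_{\Omega_R}\|_{L^p(\Omega_R)}\le\|S\|_p$, the zero-extension argument on $\Omega_R$, and \eqref{eq:shifted-form-norm-R}. Since the only constant is the whole-space Sobolev constant, it is the same for every $R$.

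The only point requiring care, rather than a real obstacle, is this domain independence. It is handled by always passing through $\R^d$ via zero extension and never using an extension operator or Poincaré constant that depends on $\partial\Omega$. No regularity of the boundary is needed.
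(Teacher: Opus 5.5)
Your proposal is correct and follows essentially the same route as the paper. You apply H\"older with exponents $p,p'$, extend $H^1_0$ functions by zero to $H^1(\mathbb R^d)$, and use the whole-space Sobolev inequality: interpolated with $L^2$ up to the critical exponent when $d\geq3$, and valid for every finite $q$ when $d=2$. Coercivity $a[u,u]\geq\|u\|_{H^1_0}^2$ comes from $V\geq1$, and $R$-independence holds because only the whole-space constant enters; you simply spell out the interpolation and the two-dimensional embedding in more detail than the paper does.
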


\begin{remark} We shall shortly show that 
$\sqrt{S}(-\Delta_{\Omega}+1)^{-1/2}$, $\sqrt{S}A^{-\frac{1}{2}}$, $\sqrt{S}(-\Delta_{\Omega_R}+1)^{-1/2}$ and $\sqrt{S}A_R^{-1/2}$ are $R$-uniformly bounded in $\Sp_r$, for $r>2p$, by a multiple of $\| S \|_p^{1/2}$. The only reason we need \cref{lem:sobolev-form-bound} is that we use the Marcinkiewicz inequality to bootstrap from a basic operator-norm bound and a bound in $\Sp_{2p,\infty}$.
\end{remark}

\begin{proof}
If $u\in H_0^1(\Omega)$, its zero extension to ${\mathbb R}^d$ belongs to $H^1({\mathbb R}^d)$ and preserves both $\|u\|_2$ and $\|\nabla u\|_2$. The Sobolev inequality on ${\mathbb R}^d$ and $H^1$-coercivity of $a$ give
$$
        \|u\|_q\leq C_{d,q}\| u \|_{H^1_0(\Omega)} \leq C_{d,q}\|u\|_a
$$
for $2\leq q\leq 2d/(d-2)$ when $d\geq3$, using interpolation between $L^2$ and the critical Sobolev exponent if necessary. When $d=2$, the same conclusion holds for $q\in[2,\infty)$ \cite[Section 5.6.1]{evans2010partial}. Taking $q=2p'\geq 2$ and applying H\"older gives \eqref{eq:2.1}. For $\Omega_R$ the identical argument is applied to the zero extension from $H_0^1(\Omega_R)$ to $H^1({\mathbb R}^d)$, so the constant is independent of $R$.
\end{proof}

\begin{lemma}\label{lem:cwikel-basic} The following weak Schatten class bounds hold:
\begin{equation}\label{sweak} 
\|\sqrt{S}(-\Delta_{\Omega_R}+1)^{-1/2}\|_{\Sp_{2p,\infty}}^2\leq \|\sqrt{S}(-\Delta_{\Omega}+1)^{-1/2}\|_{\Sp_{2p,\infty}}^{2}
 \leq C_{p,d} \|S\|_p. \end{equation}
Moreover for all $r>2p$, 
\begin{equation}\label{sstrong} \|\sqrt{S}(-\Delta_{\Omega}+1)^{-1/2}\|_{\Sp_{r}}^2,  \|\sqrt{S}(-\Delta_{\Omega_R}+1)^{-1/2}\|_{\Sp_{r}}^2
 \leq \tilde{C}_{r,d} \|S\|_p. \end{equation}
\end{lemma}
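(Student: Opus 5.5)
The proof has three steps. First, reduce the truncated problem to the full domain by extension by zero. Second, prove the weak bound on $\Omega$ via Cwikel's theorem. Third, interpolate to get the $\Sp_r$ bounds.

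\textbf{Step 1: reduction to $\Omega$.} Let $J_R:L^2(\Omega_R)\to L^2(\Omega)$ be extension by zero and $P_R=J_R^*$ restriction. The plan is to show
$$\mu_j\bigl(\sqrt{S}(-\Delta_{\Omega_R}+1)^{-1/2}\bigr)\le \mu_j\bigl(\sqrt{S}(-\Delta_{\Omega}+1)^{-1/2}\bigr)\quad\text{for all } j,$$
which immediately gives the first inequality in \eqref{sweak}. The singular values of $K_R=\sqrt{S}(-\Delta_{\Omega_R}+1)^{-1/2}$ are the square roots of the eigenvalues of $K_R^*K_R$. These are governed by min-max for the quadratic form $u\mapsto\int S|u|^2$ relative to the form $\|\nabla u\|^2+\|u\|^2$. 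Concretely,
$$\mu_j(K_R)^2=\min_{\operatorname{codim}\le j-1}\ \sup\frac{\int_{\Omega_R} S|u|^2}{\|u\|_{H^1_0(\Omega_R)}^2},$$
with the min over subspaces of $H^1_0(\Omega_R)$ and the sup over nonzero $u$ in the subspace. Extension by zero embeds $H^1_0(\Omega_R)$ isometrically into $H^1_0(\Omega)$ and preserves the numerator. The max-min characterisation over $j$-dimensional subspaces is then monotone under this inclusion, giving the claimed singular value comparison.

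\textbf{Step 2: the weak bound on $\Omega$.} Apply the same domain monotonicity once more, now comparing $\Omega$ with $\mathbb R^d$ (extend $S$ by zero). This bounds the singular values on $\Omega$ by those of $\sqrt{S}\,(-\Delta_{\mathbb R^d}+1)^{-1/2}$. That operator is of the form $f(x)g(-i\nabla)$ with $f=\sqrt S\in L^{2p}$ and $g(\xi)=(|\xi|^2+1)^{-1/2}\in L^{2p,\infty}(\mathbb R^d)$. The latter membership holds because $g\in L^{d,\infty}$ and $g\le 1$, and $2p\ge d$ when $d\ge3$; when $d=2$, $g\in L^{t}$ for every $t>2$. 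Since $2p>2$, Cwikel's inequality applies and gives
$$\|f(x)g(-i\nabla)\|_{\Sp_{2p,\infty}}\le C\|f\|_{2p}\|g\|_{2p,\infty}.$$
Squaring yields the bound $C_{p,d}\|S\|_p$ in \eqref{sweak}.

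\textbf{Step 3: strong bounds.} Use the Marcinkiewicz inequality \eqref{eq:weak-schatten-interpolation} with $s=2p$ and $t=r>2p$. The operator norm is controlled by Lemma~\ref{lem:sobolev-form-bound}, applied with the unshifted form norm $\|u\|_{H^1_0}$:
$$\|\sqrt{S}(-\Delta_{\Omega_R}+1)^{-1/2}\|^2\le C_{d,p}\|S\|_p,$$
uniformly in $R$. Hence
$$\|K\|_{\Sp_r}^r\le \tfrac{r}{r-2p}\,(C\|S\|_p)^{(r-2p)/2}\,(C\|S\|_p)^{p}=C'\|S\|_p^{r/2}.$$
Raising to the power $2/r$ gives \eqref{sstrong}, with a constant independent of $R$.

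The main obstacle is the admissible Cwikel range and the borderline case. When $2p=d$, $g$ lies only in weak $L^d$, so the standard $L^{p}\times L^{p,\infty}$ Cwikel theorem, valid for exponent $>2$, is genuinely needed. This is fine since $d\ge3$ forces $2p\ge3>2$. For $d=2$ one must check that $p>1$ keeps $2p>2$. The rest is monotonicity and interpolation.
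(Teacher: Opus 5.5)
Your proof is correct and follows essentially the same route as the paper: Cwikel's theorem for $\sqrt{S}(-\Delta_{\mathbb R^d}+1)^{-1/2}$ (with $S$ extended by zero), domain monotonicity along $\Omega_R\subset\Omega\subset\mathbb R^d$ via the isometric zero-extension of $H^1_0$, and Marcinkiewicz interpolation against the operator-norm bound from Lemma~\ref{lem:sobolev-form-bound}. The only difference is presentational: you state the monotonicity as a max--min comparison of singular values $\mu_j$, while the paper compares ranks of spectral projections of the Birman--Schwinger operators $B_{\Omega_R}$ and $B_\Omega$; these two formulations are equivalent.
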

\begin{proof}
For $\Omega=\mathbb R^d$, this result is standard \cite[Chapter 4]{SimonTraceIdeals}, \cite{Cwikel}. We present details for the convenience of the reader and since our application requires that the constants can be chosen to be domain independent. All the operators under scrutiny are operator-norm bounded by a multiple of $\| S \|_p^{1/2}$, by \cref{lem:sobolev-form-bound}, so \eqref{sstrong} follows from \eqref{sweak} using \eqref{eq:weak-schatten-interpolation}. Thus it suffices to prove
\eqref{sweak}. 

If $\Omega\neq {\mathbb R}^d$ then extend $S$ isometrically by zero into $L^p({\mathbb R}^d)$. Let $\Delta_{{\mathbb R}^d}$ denote the Laplacian in 
$L^2({\mathbb R}^d)$. We start by proving 
\[ \|\sqrt{S}(-\Delta_{{\mathbb R}^d}+1)^{-1/2}\|_{\Sp_{2p,\infty}}^2 \leq C_{p,d} \|S\|_p. \]
For $g(\xi)=(|\xi|^2+1)^{-1/2}$, one has
\begin{equation}\label{eq:multiplier-distribution}
        |\{\xi:g(\xi)>t\}| =\omega_d(t^{-2}-1)_+^{d/2} \leq C_{d,p}t^{-2p},\;\;\; t>0 .
\end{equation}
Thus 
\[ \| g \|_{L^{2p,\infty}}^{2p} := \sup_{t>0} t^{2p} |\{\xi:g(\xi)>t\}| \leq C_{d,p}. \]
Since $\sqrt{S}\in L^{2p}({\mathbb R}^d)$, the Cwikel theorem (\cite[Theorem 4.2]{SimonTraceIdeals}; \cite{Cwikel}) gives
\begin{equation}\label{eq:free-cwikel-bound}
        \|\sqrt{S}(-\Delta_{{\mathbb R}^d}+1)^{-1/2}\|_{2p,\infty} \leq C_{d,p}\|\sqrt{S}\|_{2p} = C_{d,p}\|S\|_p^{1/2}.
\end{equation}
\cref{sweak} will now be proved if we can establish the domain monotonicity result
\[ \|\sqrt{S}(-\Delta_{\Omega_R}+1)^{-1/2}\|_{2p,\infty} \leq \|\sqrt{S}(-\Delta_{\Omega}+1)^{-1/2}\|_{2p,\infty} \leq  \|\sqrt{S}(-\Delta_{{\mathbb R}^d}+1)^{-1/2}\|_{2p,\infty}. \]
In fact, this is just domain monotonicity of the Dirichlet forms in disguise. 
To see this, define the Birman--Schwinger operator $B_{\Omega_R}$ by
\[ B_{\Omega_R} = (-\Delta_{\Omega_R}+1)^{-1/2}S(-\Delta_{\Omega_R}+1)^{-1/2}, \]
and likewise for $B_{\Omega}$ and $B_{{\mathbb R}^d}$. Note that
$$
\|\sqrt{S}(-\Delta_{\Omega_R}+1)^{-1/2}\|_{2p,\infty}^{2p}=\sup_{\rho>0}\rho^{2p} \operatorname{rank}{\bf 1}_{(\rho,\infty)}(B_{\Omega_R}^{1/2})=\sup_{\kappa>0}\kappa^{p} \operatorname{rank}{\bf 1}_{(\kappa,\infty)}(B_{\Omega_R}).
$$
By the Birman--Schwinger principle,
\[ \langle (B_{\Omega_R}-\kappa)u,u \rangle >0 \;\;\; \mbox{if and only if} \;\;\;
 \int_{\Omega_R}(S|v|^2 - \kappa(|\nabla v|^2 + |v|^2)) > 0, \]
where $v_R:=(-\Delta_{\Omega_R}+1)^{-1/2}u\in H^1_0(\Omega_R)$ extends by zero into a function $v$ in 
$H^1_0(\Omega)$. Taking $u_R = (-\Delta_{\Omega}+1)^{1/2}v$ gives   $\langle (B_{\Omega}-\kappa)u_R,u_R \rangle >0$. It follows that
$$
\operatorname{rank}{\bf 1}_{(\kappa,\infty)}(B_{\Omega_R})\leq \operatorname{rank}{\bf 1}_{(\kappa,\infty)}(B_{\Omega}).
$$
The remaining inequality follows similarly.
\end{proof}

Define $W_R^{\rm loc}$ and $W$ as the following operator closures:
\begin{equation}\label{eq:WRdef}
W_R^{\rm loc} = \overline{A_R^{-1/2}S_R A_R^{-1/2}},
\;\;\; W = \overline{A^{-\frac{1}{2}}SA^{-\frac{1}{2}}}.
\end{equation}
These closures are non-negative compact operators, e.g., from the formula
\[ W = \overline{A^{-\frac{1}{2}}SA^{-\frac{1}{2}}} = (\sqrt{S}A^{-\frac{1}{2}})^*(\sqrt{S}A^{-\frac{1}{2}}) \]
and compactness of $\sqrt{S}A^{-\frac{1}{2}}$ from \cref{lem:forms}.

\begin{lemma}\label{lem:cwikel} The following weak Schatten class bounds hold:
\begin{equation}\label{sweakw} 
\|W_R^{\rm loc}\|_{\Sp_{p,\infty}}  \leq \|W\|_{\Sp_{p,\infty}}
 \leq C_{p,d}^2 \|S\|_p. \end{equation}
Moreover for all $r>p$, 
\begin{equation}\label{sstrongw}
\|W_R^{\rm loc}\|_{\Sp_{r}} \leq \|W\|_{\Sp_{r}}
 \leq \tilde{C}_{r,d}^2 \|S\|_p. 
 \end{equation}
\end{lemma}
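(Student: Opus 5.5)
Since $W=(\sqrt{S}A^{-1/2})^*(\sqrt{S}A^{-1/2})$, the eigenvalues of $W$ are the squares of the singular values of $\sqrt{S}A^{-1/2}$. Hence $\|W\|_{\Sp_{p,\infty}}=\|\sqrt{S}A^{-1/2}\|_{\Sp_{2p,\infty}}^2$ and $\|W\|_{\Sp_r}=\|\sqrt{S}A^{-1/2}\|_{\Sp_{2r}}^2$, and likewise for $W_R^{\rm loc}$ with $A_R$. The statement therefore reduces to Schatten bounds on $\sqrt{S}A^{-1/2}$ and $\sqrt{S}A_R^{-1/2}$, which I will compare with those of \cref{lem:cwikel-basic}. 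The plan has two monotonicity steps, one in the potential and one in the domain, both proved by the min-max/Birman--Schwinger argument already used in the proof of \cref{lem:cwikel-basic}.

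\emph{Step 1 (comparison with the free operator).} By \eqref{eq:shifted-form-norm}, $a[v,v]\geq\|v\|_{H^1_0(\Omega)}^2=\|(-\Delta_\Omega+1)^{1/2}v\|^2$ for $v\in\dom a\subseteq H^1_0(\Omega)$. The positive eigenvalues of $W$ are the min-max values
$$\sup_{\dim L=n}\ \inf_{0\neq v\in L\subseteq\dom a}\frac{\int S|v|^2}{a[v,v]}.$$
This follows by writing $u=A^{1/2}v$, using \eqref{eq:domAhalf}, and using the fact that $\dom a$ is a form core so that the closure does not change the quadratic form. The analogous formula holds for $B_\Omega=(-\Delta_\Omega+1)^{-1/2}S(-\Delta_\Omega+1)^{-1/2}$, with $a$ replaced by the $H^1_0$ norm and $\dom a$ replaced by the larger space $H^1_0(\Omega)$. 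A larger denominator and a smaller admissible class both decrease the min-max values, so $\mu_n(W)\leq\mu_n(B_\Omega)$ for every $n$. Equivalently, $\operatorname{rank}{\bf 1}_{(\kappa,\infty)}(W)\leq\operatorname{rank}{\bf 1}_{(\kappa,\infty)}(B_\Omega)$. Combining this with \eqref{sweak} gives $\|W\|_{\Sp_{p,\infty}}\leq\|\sqrt S(-\Delta_\Omega+1)^{-1/2}\|_{\Sp_{2p,\infty}}^2\leq C_{p,d}\|S\|_p$. The same inequality applied in $\Sp_{2r}$ norms (valid for $2r>2p$) together with \eqref{sstrong} gives $\|W\|_{\Sp_r}\leq\tilde C_{2r,d}\|S\|_p$. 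The constants must be relabelled to match the form $C^2_{p,d}$, $\tilde C^2_{r,d}$ written in the statement; this is harmless because these are generic constants. The pointwise singular-value comparison bounds every symmetric norm at once, so there is no need to invoke Marcinkiewicz again.

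\emph{Step 2 (domain monotonicity $W_R^{\rm loc}$ versus $W$).} Every $v\in\dom a_R$ extends by zero to an element of $\dom a$ with $a[v,v]=a_R[v,v]$ and the same value of $\int S|v|^2$. The admissible class in the min-max for $W_R^{\rm loc}$ therefore embeds into that for $W$ with identical Rayleigh quotients, which gives $\mu_n(W_R^{\rm loc})\leq\mu_n(W)$ for all $n$. Both the weak and strong inequalities follow, since both norms are monotone in the singular values.

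The main point of care is justifying the min-max representation for the operator closures $W$ and $W_R^{\rm loc}$, namely that $\langle Wu,u\rangle=\int S|A^{-1/2}u|^2$ for all $u\in L^2$. This holds because $A^{-1/2}$ maps $L^2$ onto $\dom a$, and because $\sqrt S A^{-1/2}$ is bounded by \cref{lem:sobolev-form-bound}. With this in place, the remaining steps are straightforward comparisons of Rayleigh quotients.
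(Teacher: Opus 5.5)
Your proof is correct and is essentially the paper's argument. The domain monotonicity of $W_R^{\rm loc}$ against $W$ is the same zero-extension/min-max comparison as in \cref{lem:cwikel-basic}, and $W$ is compared with $(-\Delta_\Omega+1)^{-1/2}S(-\Delta_\Omega+1)^{-1/2}$ via $a[v,v]\geq\|v\|_{H^1_0(\Omega)}^2$. The only differences are cosmetic: the paper phrases Step 1 as factoring $\sqrt S A^{-1/2}$ through the contraction $(-\Delta_\Omega+1)^{1/2}A^{-1/2}$ and obtains the $\Sp_r$ bound by Marcinkiewicz interpolation, whereas your pointwise singular-value comparison gets the $\Sp_r$ bound directly from \eqref{sstrong}.
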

\begin{proof}

The inequalities 
$\|W_R^{\rm loc}\|_{\Sp_{p,\infty}} \leq \|W\|_{\Sp_{p,\infty}}$ and $\|W_R^{\rm loc}\|_{\Sp_{r}} \leq \|W\|_{\Sp_{r}}$
follow by a domain monotonicity argument similarly to \cref{lem:cwikel-basic}. Thus it suffices to prove
$\|W\|_{\Sp_{p,\infty}}\leq C_{p,d}^2 \|S\|_p$ and $\|W\|_{\Sp_{r}}\leq \tilde{C}_{r,d}^2 \|S\|_p$. Since $A^{-\frac{1}{2}}$ maps into
$H^1_0(\Omega)=\dom (-\Delta_{\Omega}+1)^{1/2}$,
\[ \sqrt{S}A^{-\frac{1}{2}}=[\sqrt{S}(-\Delta_{\Omega}+1)^{-1/2}][(-\Delta_{\Omega}+1)^{1/2}A^{-\frac{1}{2}}]. \]
The Schatten class operator $\sqrt{S}(-\Delta_{\Omega}+1)^{-1/2}$ enjoys the bounds in \cref{lem:cwikel-basic}. Since $\| A^{\frac{1}{2}}u \|^2 \geq \| u \|_{H^1_0(\Omega)}^2$ for
all $u\in\dom(A^{\frac{1}{2}})$, the bounded operator $(-\Delta_{\Omega}+1)^{1/2}A^{-\frac{1}{2}}$ has norm $\leq 1$. 
It follows that
$\|\sqrt{S}A^{-\frac{1}{2}}\|_{\Sp_{2p,\infty}}^{2}
 \leq C_{p,d} \|S\|_p$ where $C_{p,d}$ is the constant in \eqref{eq:free-cwikel-bound}.
From this we obtain $\| W \|_{\Sp_{p,\infty}}\leq C_{p,d}\| S \|_p$, and the remaining bound by Marcinkiewicz interpolation in \cref{eq:weak-schatten-interpolation}.
\end{proof}

\begin{lemma}\label{lem:forms}
The resolvent of $H$ satisfies
\begin{align}\label{eq:resdiff} H^{-1} &= A^{-\frac{1}{2}}(I+iW)^{-1}A^{-\frac{1}{2}},\\
 H^{-1}-A^{-1} &= A^{-\frac{1}{2}}(I+iW)^{-1}(-iW)A^{-\frac{1}{2}}.\label{eq:resdiff2}
\end{align}
Consequently, $H^{-1}-A^{-1}$ lies in $\Sp_{p,\infty}$ and in $\Sp_{r}$ for $r>p$; in particular $H^{-1}-A^{-1}$ is compact so $\sigma_{\rm ess}(H)=\sigma_{\rm ess}(A)$.
\end{lemma}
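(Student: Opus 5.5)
The plan is to verify the factorization \eqref{eq:resdiff} at the level of forms, then read off \eqref{eq:resdiff2} and the Schatten properties from \cref{lem:cwikel}.

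\textbf{Step 1: invertibility of $I+iW$.} Since $W=(\sqrt S A^{-\frac12})^*(\sqrt S A^{-\frac12})$ is bounded, compact and non-negative, we have
\[
\operatorname{Re}\langle (I+iW)f,f\rangle=\|f\|^2 .
\]
Hence $\|(I+iW)f\|\geq\|f\|$, and the same holds for the adjoint $I-iW$. Therefore $I+iW$ is boundedly invertible, with $\|(I+iW)^{-1}\|\leq 1$.

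\textbf{Step 2: the factorization \eqref{eq:resdiff}.} Put $T:=A^{-\frac12}(I+iW)^{-1}A^{-\frac12}$, which is bounded. We show that $T=H^{-1}$. Given $f\in L^2(\Omega)$, let $u=Tf$. Then $u\in\dom(A^{\frac12})=\dom h$ by \eqref{eq:domAhalf}. Set $g=A^{\frac12}u=(I+iW)^{-1}A^{-\frac12}f$. For every $v\in\dom h$,
\[
h[u,v]=\langle A^{\frac12}u,A^{\frac12}v\rangle+i\langle \sqrt S u,\sqrt S v\rangle
=\langle (I+iW)g,A^{\frac12}v\rangle .
\]
The second equality uses $\sqrt S u=(\sqrt S A^{-\frac12})g$ and the definition of $W$. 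Since $(I+iW)g=A^{-\frac12}f$, the right-hand side equals $\langle A^{-\frac12}f,A^{\frac12}v\rangle=\langle f,v\rangle$. By Kato's first representation theorem this gives $u\in\dom H$ and $Hu=f$. Thus $H$ is surjective with right inverse $T$.

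Injectivity of $H$ comes from the same computation run backwards. If $Hu=0$, then $h[u,u]=0$, and taking real parts gives $\|A^{\frac12}u\|^2=0$, so $u=0$. Hence $H^{-1}=T$, which is \eqref{eq:resdiff}.

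\textbf{Step 3: the difference formula \eqref{eq:resdiff2}.} Write $A^{-1}=A^{-\frac12}(I+iW)^{-1}(I+iW)A^{-\frac12}$ and subtract it from \eqref{eq:resdiff}. This gives \eqref{eq:resdiff2} immediately.

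\textbf{Step 4: Schatten membership.} Both $\Sp_{p,\infty}$ and $\Sp_r$ are two-sided ideals in the bounded operators. The operators $A^{-\frac12}$ and $(I+iW)^{-1}$ are bounded, and $W\in\Sp_{p,\infty}\cap\Sp_r$ for every $r>p$ by \cref{lem:cwikel}. Therefore $H^{-1}-A^{-1}$ lies in the same classes, and in particular it is compact.

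\textbf{Step 5: equality of essential spectra.} Since $H^{-1}-A^{-1}$ is compact, Weyl's theorem for the index-zero Fredholm essential spectrum gives $\sigma_{\rm ess}(H^{-1})=\sigma_{\rm ess}(A^{-1})$. The spectral mapping $z\mapsto 1/z$ then transfers this to $\sigma_{\rm ess}(H)=\sigma_{\rm ess}(A)$. This is valid because $0$ is in the resolvent set of both operators: for $A$ since $V\geq1$, and for $H$ by Step 2. Under $1/z$, the point $0$ of the bounded operators corresponds to infinity and does not contribute a finite essential spectral point.

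\textbf{Main obstacle.} The hardest point is the domain bookkeeping in Step 2: one must check that $Tf$ really lies in the form domain and that the identity holds for every $v\in\dom h$. This rests on $A^{-\frac12}$ mapping onto $\dom a$ and on $\sqrt S A^{-\frac12}$ being bounded, which follows from \cref{lem:sobolev-form-bound}.
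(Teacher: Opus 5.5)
Your proposal is correct and follows essentially the paper's route. Both arguments rest on the form identity $h[u,v]=\langle (I+iW)A^{\frac12}u,A^{\frac12}v\rangle$. The paper starts from the existing $H^{-1}$ and derives $A^{-\frac12}=(I+iW)A^{\frac12}H^{-1}$, whereas you verify via Kato's representation theorem and an injectivity argument that the candidate $A^{-\frac12}(I+iW)^{-1}A^{-\frac12}$ is a two-sided inverse of $H$. This also makes explicit the invertibility of $I+iW$ and $0\in\rho(H)$, which the paper uses tacitly. Your Steps 3–5 spell out the consequences that the paper states only briefly.
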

\begin{proof}
For all $u$ and $v$ in $\dom h = \dom a$ we have
\begin{equation}\label{eq:has} 
h[u,v] = a[u,v] + i\int_\Omega S u\overline v = \langle A^{\frac{1}{2}}u,A^{\frac{1}{2}}v\rangle + i\langle (\sqrt{S}A^{-\frac{1}{2}})A^{\frac{1}{2}}u,(\sqrt{S}A^{-\frac{1}{2}})A^{\frac{1}{2}}v\rangle. \end{equation}
Since $W=(\sqrt{S}A^{-\frac{1}{2}})^*(\sqrt{S}A^{-\frac{1}{2}})$,
\[ h[u,v] = \langle (I+iW)A^{\frac{1}{2}}u,A^{\frac{1}{2}}v\rangle. \]
Now take $u\in \dom H$ and $v=A^{-\frac{1}{2}}w\in \dom h = \dom a$ to obtain
\[ \langle Hu,A^{-\frac{1}{2}}w \rangle = \langle (I+iW)A^{\frac{1}{2}}u,w\rangle \]
whence for all $\phi,w\in L^2(\Omega)$, taking $u=H^{-1}\phi$,
\[  \langle A^{-\frac{1}{2}}\phi,w\rangle = \langle \phi,A^{-\frac{1}{2}}w \rangle = \langle (I+iW)A^{\frac{1}{2}}H^{-1}\phi,w\rangle. \]
This gives $A^{-\frac{1}{2}}= (I+iW)A^{\frac{1}{2}}H^{-1}$, from which (\ref{eq:resdiff}) follows; (\ref{eq:resdiff2}) is then
immediate by the resolvent identity.
\end{proof}

\begin{remark}[Higher-order operators]\label{rem:ho-basic}
The above Schatten estimates extend to positive elliptic realizations
\(A_D\) of order \(2m\), \(D\in\{\Omega,\Omega_R\}\), whose form domains are
\(H_0^m(D)\) and which are uniformly \(H_0^m\)-coercive. If
\(S\geq0\), \(S\in L^p(\Omega)\), \(p>1\), and \(2mp\geq d\), then, with
$
 T_D=\sqrt{S|_D}\,A_D^{-1/2}$ and $W_D=T_D^*T_D$,
one has
\[
 \|T_D\|_{\Sp_{2p,\infty}}
 \leq C\|S\|_p^{1/2},
 \qquad
 \|W_D\|_{\Sp_{p,\infty}}
 \leq C^2\|S\|_p,
\]
uniformly in $D$.
Indeed, the multiplier \((1+|\xi|^{2m})^{-1/2}\) belongs to
\(L^{2p,\infty}(\mathbb R^d)\), and the result follows from Cwikel's
theorem and uniform coercivity. Interpolation also gives
\(W_D\in\Sp_r\) uniformly for every \(r>p\).
\end{remark}

\section{Resolvent differences and strong resolvent convergence}
\label{sec:BS_trace}

\begin{lemma}\label{lem:kr}
Define $K_R= H_R^{-1}- A_R^{-1}$. For every $r>p$, one has
\begin{equation}\label{eq:k-schatten-convergence}
        \sup_R\|K_R\|_{\Sp_r}<\infty .
\end{equation}
For every finite $R>0$, $A_R^{-1},H_R^{-1}\in\Sp_r$.
\end{lemma}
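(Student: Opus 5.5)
I will prove this by writing down the truncated analogue of \cref{lem:forms} on each $\Omega_R$. Then I bound the resulting factorization in $\Sp_r$ using the uniform estimates of \cref{lem:cwikel}. Let $W_R^{\rm loc}$ be as in \eqref{eq:WRdef}. The computation in the proof of \cref{lem:forms} transfers verbatim to the form $h_R$ on $L^2(\Omega_R)$, because $a_R[u,u]=\|A_R^{1/2}u\|^2$ by \eqref{eq:shifted-form-norm-R}. It gives
\[
K_R=H_R^{-1}-A_R^{-1}=A_R^{-1/2}(I+iW_R^{\rm loc})^{-1}(-iW_R^{\rm loc})A_R^{-1/2}.
\]
Here $I+iW_R^{\rm loc}$ is invertible, since $W_R^{\rm loc}\geq0$ is self-adjoint. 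Hence $\operatorname{Re}\langle (I+iW_R^{\rm loc})f,f\rangle=\|f\|^2$, and so $\|(I+iW_R^{\rm loc})^{-1}\|\leq1$ uniformly in $R$.

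Next I bound the outer factors. By \eqref{eq:shifted-form-norm-R}, $A_R\geq1$, so $\|A_R^{-1/2}\|\leq1$. Using the ideal property of $\Sp_r$,
\[
\|K_R\|_{\Sp_r}\leq \|A_R^{-1/2}\|\,\|(I+iW_R^{\rm loc})^{-1}\|\,\|W_R^{\rm loc}\|_{\Sp_r}\,\|A_R^{-1/2}\|\leq \|W\|_{\Sp_r}\leq \tilde C_{r,d}^2\|S\|_p
\]
for every $r>p$, by \eqref{sstrongw}. This gives \eqref{eq:k-schatten-convergence}.

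For the second assertion, fix $R$. Since $\Omega_R$ is bounded, I compare $A_R$ with the Dirichlet Laplacian on $\Omega_R$. Because $V\geq1$, the form $a_R$ dominates $\|u\|_{H^1_0(\Omega_R)}^2$, so $(-\Delta_{\Omega_R}+1)^{1/2}A_R^{-1/2}$ is a contraction. This is the same argument as in the proof of \cref{lem:cwikel}. Consequently, the singular values satisfy $\mu_j(A_R^{-1/2})\leq\mu_j((-\Delta_{\Omega_R}+1)^{-1/2})$. For the Dirichlet Laplacian on a bounded open set, I invoke Weyl-type eigenvalue bounds with no boundary regularity. Domain monotonicity places $\Omega_R$ inside a cube $Q$, and min-max then gives $\lambda_j(-\Delta_{\Omega_R})\geq\lambda_j(-\Delta_Q)\geq c\,j^{2/d}$. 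Hence $\mu_j(A_R^{-1})\leq C(1+j^{2/d})^{-1}$, so $A_R^{-1}\in\Sp_r$ whenever $r>d/2$.

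Since $p\geq d/2$ for $d\geq3$, and $p>1=d/2$ for $d=2$, every $r>p$ satisfies $r>d/2$. Therefore $A_R^{-1}\in\Sp_r$. Finally, $H_R^{-1}=A_R^{-1}+K_R\in\Sp_r$.

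I expect the only delicate point to be verifying that the factorization of \cref{lem:forms} holds on $\Omega_R$. In particular, $W_R^{\rm loc}$ must be exactly the Birman–Schwinger operator built from $A_R$ with $S$ restricted to $\Omega_R$. The domain monotonicity in \cref{lem:cwikel} then supplies the $R$-independence.
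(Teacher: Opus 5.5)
Your proposal is correct and follows the paper's proof. You use the same factorization $K_R=A_R^{-1/2}(I+iW_R^{\rm loc})^{-1}(-iW_R^{\rm loc})A_R^{-1/2}$ with contractive outer factors and the uniform bound from \cref{lem:cwikel}, and for the second assertion you likewise compare $A_R$ with $-\Delta_{\Omega_R}+1$. The only difference is that you bound the Dirichlet eigenvalues of $\Omega_R$ from below by domain monotonicity into a cube rather than quoting Weyl's law, which is slightly more elementary and requires no boundary regularity.
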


\begin{proof}
As for \cref{eq:resdiff2}, one has
\[ K_R = H_R^{-1}-A_R^{-1} = A_R^{-\frac{1}{2}}(I+iW_R^{\rm loc})^{-1}(-iW_R^{\rm loc})A_R^{-\frac{1}{2}}. \]
Since the operator norms $\| A_R^{-\frac{1}{2}}\|$ and $\|(I+iW_R^{\rm loc})^{-1}\|$ are $\leq 1$,
\cref{lem:cwikel} yields
$$
        \|K_R\|_{\Sp_r}
        \leq \|W_R^{\mathrm{loc}}\|_{\Sp_r}
        \leq \tilde{C}_{r,d}^2 \| S \|_p.
$$
Finally we show that for finite $R>0$, $A_R^{-1}$ and $H_R^{-1}$ lie in $\Sp_r$ (note: their individual $\Sp_r$ norms
are not uniformly bounded in $R$). Obviously it suffices to show that $A_R^{-1}\in \Sp_r$. This is rather standard. We have assumed that, 
in the form sense, $A_R \geq -\Delta_{\Omega_R}+1$; thus it suffices to show that $(-\Delta_{\Omega_R})^{-1} \in \Sp_r$. This is well known: 
Weyl's law bounds the number of eigenvalues of $-\Delta_{\Omega_R}$ below $\lambda$ by
\[ N(\lambda,-\Delta_{\Omega_R}) \sim \frac{\omega_d}{(2\pi)^d} |\Omega_R| \lambda^{d/2}; \]
this implies that  $(-\Delta_{\Omega_R})^{-1}$ lies in $\Sp_{d/2,\infty}$ and hence in $\Sp_r$ for $r>p\geq d/2$.
\end{proof}

\begin{lemma} \label{lem:sconverge} Let $J_R$ denote extension by zero from $L^2(\Omega_R)$ to $L^2(\Omega)$
and let $Q_R$ denote restriction from $L^2(\Omega)$ to $L^2(\Omega_R)$. Then $A_R $ converges to $A$ and
$H_R$ converges to $H$ in generalized strong resolvent sense: for each $f\in L^2(\Omega)$, 
\[ J_RA_R^{-1}Q_Rf \to A^{-1}f,  \;\;\; J_RH_R^{-1}Q_Rf \to H^{-1}f, \qquad R \to +\infty. \]
In particular for each $\lambda\in \sigma_{\mathrm{ess}}(A)$ and each open interval $I$ containing $\lambda$,
\begin{equation}\label{eq:count} \operatorname{rank}{\bf 1}_I(A_R) \to + \infty, \;\;\; R \to +\infty. 
\end{equation}
\end{lemma}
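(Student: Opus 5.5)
I will first prove the convergence for $A$, then transfer it to $H$ via the Birman--Schwinger factorizations, and finally deduce the counting statement \eqref{eq:count} from the self-adjoint theory.

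\emph{Convergence for $A_R$.} Set $P_R=J_RQ_R$, the multiplication by ${\bf 1}_{\Omega_R}$. Since $\|J_RA_R^{-1}Q_R\|\le 1$ uniformly in $R$, it suffices to prove convergence on a dense set. Because $C_c^\infty(\Omega)$ is a form core for $a$ (standard for $V\in L^1_{\rm loc}$, $V\ge 1$), the set $\{A u: u\in \dom A,\ u \text{ with compact support}\}$ is not convenient. I will instead argue through forms, as in the generalized strong resolvent convergence results of \cite[Section 7]{bogli2020essential}.

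Fix $f\in L^2(\Omega)$ and put $u_R=J_RA_R^{-1}Q_Rf\in\dom a$. Testing $a_R$ against $A_R^{-1}Q_Rf$ gives
\[
\|u_R\|_a^2=\langle Q_Rf,A_R^{-1}Q_Rf\rangle\le\|f\|^2 ,
\]
so $(u_R)$ is bounded in the Hilbert space $(\dom a,\|\cdot\|_a)$. Hence a subsequence converges weakly in $\dom a$ to some $u$. For $\phi\in C_c^\infty(\Omega)$ we have $\operatorname{supp}\phi\subset\Omega_R$ for all large $R$, since the $\Omega_R$ are open, nested, exhaust $\Omega$, and $\operatorname{supp}\phi$ is compact. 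For such $R$,
\[
a[u_R,\phi]=\langle f,\phi\rangle ,
\]
and passing to the limit gives $a[u,\phi]=\langle f,\phi\rangle$. By the core property this holds for all $\phi\in\dom a$, so $u=A^{-1}f$. Weak convergence is therefore independent of the subsequence.

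For strong convergence I use a norm argument:
\[
\|u_R-u\|_a^2=\|u_R\|_a^2-2\,{\rm Re}\,a[u_R,u]+\|u\|_a^2 .
\]
Here $\|u_R\|_a^2=\langle f,u_R\rangle\to\langle f,u\rangle=\|u\|_a^2$ and $a[u_R,u]\to\|u\|_a^2$ by weak convergence, so $\|u_R-u\|_a\to0$, and in particular $u_R\to u$ in $L^2$.

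\emph{Convergence for $H_R$.} The same argument works with the sectorial form $h$. Let $w_R=J_RH_R^{-1}Q_Rf$. Then ${\rm Re}\,h[w_R,w_R]=\|w_R\|_a^2$, and
\[
\|w_R\|_a^2\le|\langle Q_Rf,H_R^{-1}Q_Rf\rangle|\le\|f\|\,\|w_R\| ,
\]
so $(w_R)$ is bounded in $\dom a$. Weak limits satisfy $h[w,\phi]=\langle f,\phi\rangle$ for $\phi\in C_c^\infty(\Omega)$; the term $i\int S w_R\bar\phi$ passes to the limit because $\sqrt S A^{-1/2}$ is compact (\cref{lem:forms}). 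This identifies $w=H^{-1}f$. For strong convergence I write
\[
J_RH_R^{-1}Q_R=J_RA_R^{-\frac12}(I+iW_R^{\rm loc})^{-1}A_R^{-\frac12}Q_R .
\]
I then plan to show that $\sqrt S\,J_RA_R^{-1/2}Q_R\to\sqrt S A^{-1/2}$ in a suitable sense, using the uniform Schatten bounds of \cref{lem:cwikel}. This step needs care, and I expect it to be the main technical obstacle.

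A safer alternative is a compactness argument. Since $w_R\rightharpoonup w$ in $\dom a$ and the embedding of $\dom a$ into $L^2(S\,dx)$ is compact, we get $\int S|w_R-w|^2\to0$. Then
\[
{\rm Re}\,h[w_R-w,w_R-w]=\|w_R-w\|_a^2 ,
\]
and expanding $h[w_R-w,w_R-w]$ as in the $A$ case, with $h[w_R,w_R]=\langle f,w_R\rangle\to\langle f,w\rangle=h[w,w]$, gives $\|w_R-w\|_a\to0$.

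\emph{Counting eigenvalues.} Given $\lambda\in\sigma_{\rm ess}(A)$ and an open interval $I\ni\lambda$, I take an orthonormal Weyl-type family: for any $N$, the spectral subspace ${\bf 1}_{I'}(A)$ of a slightly smaller interval $I'\Subset I$ around $\lambda$ is infinite-dimensional. Generalized strong resolvent convergence implies $J_R g(A_R)Q_R\to g(A)$ strongly for continuous $g$ vanishing at infinity, by Stone--Weierstrass applied to the resolvents at $\pm i$ or at negative reals. Choose a continuous $g$ with $0\le g\le 1$, $g=1$ on $I'$, and ${\rm supp}\,g\subset I$, together with $N$ orthonormal vectors $e_j$ in ${\rm ran}\,{\bf 1}_{I'}(A)$. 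The $N\times N$ Gram matrix of $J_Rg(A_R)Q_Re_j$ then converges to the identity. Hence ${\rm rank}\,g(A_R)\ge N$ for large $R$, which forces ${\rm rank}\,{\bf 1}_I(A_R)\ge N$. Since $N$ is arbitrary, this gives \eqref{eq:count}.
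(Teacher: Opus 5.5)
Your proof is correct, but it takes a different route from the paper's.

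\textbf{Convergence.} The paper uses a C\'ea/Galerkin-orthogonality argument. With $u=A^{-1}f$ and $u_R=J_RA_R^{-1}Q_Rf$ one has $a[u-u_R,v_R]=0$ for all $v_R\in J_R\dom a_R$, so $\|u-u_R\|_a\le\inf_{w_R\in J_R\dom a_R}\|u-w_R\|_a$. The same bound holds for $H$ with constant $1+\|\sqrt S A^{-1/2}\|^2$, which comes from boundedness and coercivity of $h$ on $\dom a$. Density of $C_c^\infty(\Omega)\subset\bigcup_R J_R\dom a_R$ then finishes. This is shorter and quantitative: the error is controlled by the best-approximation error from the truncated form domain, and $A$ and $H$ are handled identically without subsequences.

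Your argument (weak compactness in $(\dom a,\|\cdot\|_a)$, identification of the limit by test functions, then the energy identity) is qualitative but equally sound. It uses the same two ingredients: the form core, and the fact that compact subsets of $\Omega$ lie in some $\Omega_R$. Two pieces of it are unnecessary. The factorisation route you sketch first can be dropped. The compactness of $\dom a\hookrightarrow L^2(S\,dx)$ is also not needed. Since $h$ is bounded on $(\dom a,\|\cdot\|_a)$, weak convergence alone gives $h[w_R,w]\to h[w,w]$ and $h[w,w_R]\to h[w,w]$. Combined with $h[w_R,w_R]=\langle f,w_R\rangle\to\langle f,w\rangle=h[w,w]$, this yields $\|w_R-w\|_a^2=\operatorname{Re}h[w_R-w,w_R-w]\to0$.

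\textbf{Counting.} The paper invokes Reed--Simon Theorem VIII.24 on strong convergence of spectral projections. Strictly, that requires shrinking $I$ so its endpoints are not eigenvalues of $A$. Your continuous cutoff $g$ with the Gram-matrix argument avoids the endpoint issue and is self-contained. The one step you should make explicit is the passage from resolvent convergence to $J_Rg(A_R)Q_R\to g(A)$ strongly. It rests on $Q_RJ_R=I$, which makes $g\mapsto J_Rg(A_R)Q_R$ a contractive $*$-homomorphism. Since $A_R,A\ge1$, the single resolvent point $z=0$ already suffices: approximate $t\mapsto g(1/t)$ on $[0,1]$ uniformly by polynomials vanishing at $0$. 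No resolvents at $\pm i$ are needed.
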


\begin{proof}
Theorem 7.1 i) of \cite{bogli2020essential} proves generalized strong resolvent convergence
for even-order operators, though for ease of presentation the authors assumed bounded coefficients.
We outline here a different argument for the convenience of the reader. 

Let $\widetilde{\mathcal V}_R=J_R\,\,\mathcal \dom a_R \subset\dom a$. The union of these spaces is dense in $\dom a$ 
because $C_c^\infty(\Omega)$ is dense in $\dom a$.
For $f\in L^2(\Omega)$, let $u=A^{-1} f$ and let $u_R=J_R A_R^{-1}Q_Rf$. Then
$$
        a[u,v]=\langle f,v\rangle\quad(v\in\Ha),\qquad a[u_R,v_R]=\langle f,v_R\rangle\quad(v_R\in\widetilde{\mathcal V}_R),
$$
whence $a[u-u_R,v_R]=0$ for $v_R\in\widetilde{\mathcal V}_R$ giving
$\| u - u_R \|_a^2 = a[u-u_R,u-u_R] = a[u-u_R,u-w_R]\leq \| u - u_R\|_a \|u-w_R\|_a$ for all $w_R\in\widetilde{\mathcal V}_R$. 
This implies
\[  \|u-u_R\|_a \leq \inf_{w_R\in\widetilde{\mathcal V}_R} \| u - w_R\|_a. \]
By density of $C_c^\infty$ in $\dom a$ this gives $u_R\to u$ in $\|\cdot\|_a$, and in $L^2(\Omega)$ a fortiori. 
This proves $A_R\to A$ in generalized strong resolvent sense.

The proof that $H_R\to H$ (gsr) is similar. With $\tilde{u}=H^{-1}f$ and $\tilde{u_R} = J_RH_R^{-1}Q_Rf$ one obtains
$h[u-u_R,u-u_R] = h[u-u_R,u-w_R]$ for all $w_R \in \widetilde{\mathcal V}_R$. Thus by \eqref{eq:has}
\[ \| u - u_R\|_a^2 = \Re h[u-u_R,u-u_R] \leq \| u - u_R \|_a(1+\|\sqrt{S}A^{-\frac{1}{2}}\|^2)\| u - w_R \|_a, \]
whence $\| u - u_R\|_a\leq(1+\|\sqrt{S}A^{-\frac{1}{2}}\|^2) \displaystyle{\inf_{w_R\in\widetilde{\mathcal V}_R}} \| u - w_R\|_a$, 
differing from the self-adjoint case only by a constant.

Finally, to prove \eqref{eq:count}, we may assume without loss of generality that $0\not\in I$ and apply 
Theorem VIII.24 of \cite{reedsimon1}: since $A_R\to A$ in generalized strong resolvent sense, the spectral 
projections ${\bf 1}_{I}(A_R)$ converge to ${\bf 1}_{I}(A)$ in generalized sense. In particular since ${\bf 1}_{I}(A)$ is of 
infinite rank, the number of eigenvalues of $A_R$ in $I$ cannot remain bounded.
\end{proof}

\section{Spectral Variation}
\label{sec:spec_var}
The following result converts Schatten control of the non-self-adjoint perturbation into an eigenvalue-counting statement. In the proof of the main theorem, it says that a uniformly controlled dissipative perturbation cannot remove all bounded-domain eigenvalues from a spectral window. It is where Gil's work enters in a compact-operator form.

\begin{lemma}\label{gil_consequence}
Let $r>1$. Let $B$ and $C$ be compact operators on a separable Hilbert space, with $B=B^*$ and $C-B\in\Sp_r$. Let $(b_j)_{j=1}^\infty$ and $(c_j)_{j=1}^\infty$ be extended enumerations\footnote{Meaning that every nonzero eigenvalue is included with algebraic multiplicity and that the remaining entries are zero.} of the eigenvalues of $B$ and $C$. Then there is a permutation $\pi$ of $\mathbb N$ and a constant $b_r$, depending only on $r$, such that
\begin{equation}\label{eq:full-variation}
        \left(\sum_{k=1}^\infty |c_k-b_{\pi(k)}|^r\right)^{1/r}
        \leq (2+2b_r)\|C-B\|_{\Sp_r}.
\end{equation}
Consequently, if $J\subset{\mathbb R}$ is a Borel set and
$$
        \eta=\operatorname{dist}(J,\sigma(C)\cup\{0\})>0,
$$
then, with $N=\operatorname{rank}{\bf 1}_J(B)$,
\begin{equation}\label{eq:counting-gil}
        N\eta^r\leq (2+2b_r)^r\|C-B\|_{\Sp_r}^r .
\end{equation}
\end{lemma}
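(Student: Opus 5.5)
The plan is to derive \eqref{eq:full-variation} from Gil's spectral-variation theorem \cite{MR4764540,MR4722454}, and then obtain \eqref{eq:counting-gil} by a short counting argument.

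For \eqref{eq:full-variation}, the result I would quote from Gil' is of the following type. If $B$ is compact and self-adjoint and $C-B\in\Sp_r$ with $r>1$, then the eigenvalues of $C$ can be matched to those of $B$, both extended by zeros. The matching is a bijection $\pi$ with $\ell^r$-cost bounded by a constant depending only on $r$ times $\|C-B\|_{\Sp_r}$. Gil's bound is stated in the form $\|C-B\|_{\Sp_r}$ plus a term $b_r\|\Im C\|_{\Sp_r}$ or similar. We have $\Im C=\Im(C-B)$ because $B=B^*$, and therefore $\|\Im C\|_{\Sp_r}\leq\|C-B\|_{\Sp_r}$.

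If Gil's theorem is formulated only in finite dimensions, I would first apply it to compressions. Take $P_n$ to be the projection onto the span of the first $n$ eigenvectors of $B$ together with a Schur basis for $C$, so that the compressions $P_nBP_n$ and $P_nCP_n$ have eigenvalues converging to those of $B$ and $C$. I would then pass to the limit in \eqref{eq:full-variation} using a diagonal argument on the finite permutations, which is possible because the entries accumulate only at $0$. The factor $2$ in $(2+2b_r)$ absorbs the triangle inequality that arises from the Hermitian and anti-Hermitian splitting. I expect this bookkeeping, namely matching Gil's exact hypotheses and constants and handling the infinite-dimensional limit, to be the main obstacle.

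For \eqref{eq:counting-gil}, suppose $N\geq 1$, and take any finite $M\leq N$ indices $j$ with $b_j\in J$, counted with multiplicity. Since $\eta>0$ and $\eta\le\dist(J,0)$, we have $0\notin J$, so each of these $b_j$ is a genuine nonzero eigenvalue of $B$.

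Under the permutation $\pi$, each such $j$ equals $\pi(k)$ for a unique $k$. The corresponding $c_k$ is either an eigenvalue of $C$ or a padding zero. In both cases $c_k\in\sigma(C)\cup\{0\}$, so $|c_k-b_{\pi(k)}|\geq\eta$.

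Summing over these $M$ indices gives $M\eta^r\leq\sum_k|c_k-b_{\pi(k)}|^r\leq(2+2b_r)^r\|C-B\|_{\Sp_r}^r$. Letting $M\uparrow N$ yields \eqref{eq:counting-gil}, and in particular shows that $N$ is finite.
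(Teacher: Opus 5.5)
Your counting argument for \eqref{eq:counting-gil} is the same as the paper's. Your $M\uparrow N$ device is a harmless variant of noting that ${\bf 1}_J(B)$ has finite rank, because $B$ is compact and $J$ stays away from $0$.

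The gap is in \eqref{eq:full-variation}. You assume that Gil's theorem already pairs the complex eigenvalues $c_k$ with the $b_{\pi(k)}$. The result the paper invokes, \cite[Theorem 1.1]{MR4764540}, controls only the real parts:
\[
\Bigl(\sum_{k}|\operatorname{Re}c_k-b_{\pi(k)}|^r\Bigr)^{1/r}\leq(1+2b_r)\|C-B\|_{\Sp_r}.
\]
You still need a bound on $\sum_k|\operatorname{Im}c_k|^r$. Your inequality $\|\operatorname{Im}C\|_{\Sp_r}\leq\|C-B\|_{\Sp_r}$ bounds the imaginary part of the \emph{operator}, not the imaginary parts of its \emph{eigenvalues}. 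For non-normal $C$, passing from one to the other is a separate theorem, and that is the missing step.

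The paper supplies it with Weyl's inequality, a Ky Fan-type majorization result \cite[Chapter II, Section 6]{GohbergKreinIntro}:
\[
\Bigl(\sum_k|\operatorname{Im}c_k|^r\Bigr)^{1/r}\leq\|\operatorname{Im}C\|_{\Sp_r}=\|\operatorname{Im}(C-B)\|_{\Sp_r}\leq\|C-B\|_{\Sp_r}.
\]
Then $|c_k-b_{\pi(k)}|\leq|\operatorname{Re}c_k-b_{\pi(k)}|+|\operatorname{Im}c_k|$, together with Minkowski's inequality in $\ell^r$, gives the constant $(1+2b_r)+1=2+2b_r$. This real/imaginary split of the eigenvalues is where the factor comes from, not a Hermitian/anti-Hermitian splitting of $C-B$ alone. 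Without the Weyl step, your proof of \eqref{eq:full-variation} rests on a guessed form of the cited theorem that it does not have.

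The finite-dimensional compression detour is unnecessary, since the cited theorem is applied directly to the bounded operators $B$ and $C$. If it were needed, your particular choice of $P_n$ (eigenvectors of $B$ plus Schur vectors of $C$) would play no role. What drives the eigenvalue convergence is norm convergence $P_nCP_n\to C$ and $P_nBP_n\to B$, valid for any $P_n\to I$ strongly by compactness. Combined with $\|P_n(C-B)P_n\|_{\Sp_r}\leq\|C-B\|_{\Sp_r}$, this reduces the matter to a diagonal limit in the permutations, which you would still have to write out.
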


\begin{proof}
Gil's theorem \cite[Theorem 1.1]{MR4764540}, applied to the bounded operator $C$ and the self-adjoint operator $B$, gives
\begin{equation}\label{eq:real-variation}
        \left(\sum_{k=1}^\infty |\operatorname{Re}c_k-b_{\pi(k)}|^r\right)^{1/r}
        \leq (1+2b_r)\|C-B\|_{\Sp_r}.
\end{equation}
For the imaginary parts, we use Weyl's classical inequality,
\begin{equation}\label{eq:imag-variation}
        \left(\sum_{k=1}^\infty |\operatorname{Im}c_k|^r\right)^{1/r}
        \leq \|\operatorname{Im}C\|_{\Sp_r},
\end{equation}
for example, \cite[Chapter II, Section 6]{GohbergKreinIntro}. (Gil's paper \cite{MR4722454} gives recent refinements of this estimate in Schatten classes.) Since $B=B^*$, $\operatorname{Im}C=\operatorname{Im}(C-B)$. For any $r>1$ and 
operator $T\in \Sp_r$,
$$
        \|\operatorname{Im}T\|_{\Sp_r}
        =\left\|{\frac{T-T^*}{2i}}\right\|_{\Sp_r}
        \leq {\frac{1}{2}}\bigl(\|T\|_{\Sp_r}+\|T^*\|_{\Sp_r}\bigr)
        =\|T\|_{\Sp_r}.
$$
Thus $\|\operatorname{Im}C\|_{\Sp_r}\leq\|C-B\|_{\Sp_r}$. Combining this with \cref{eq:real-variation,eq:imag-variation}, and then using the triangle inequality in $\ell^r$ gives \cref{eq:full-variation}.

It remains only to prove the counting consequence. Since $\eta>0$, the set $J$ is separated from $0$, and ${\bf 1}_J(B)$ has finite rank. Each eigenvalue of $B$ in $J$, counted with multiplicity, occurs as $b_{\pi(k)}$ for exactly one $k$. For those $k$, the corresponding extended eigenvalue $c_k$ lies in $\sigma(C)\cup\{0\}$, so $|c_k-b_{\pi(k)}|\geq\eta$. Thus
$$
        N\eta^r\leq \sum_{k=1}^\infty |c_k-b_{\pi(k)}|^r,
$$
and \cref{eq:counting-gil} follows from \cref{eq:full-variation}.
\end{proof}

\section{Spectral Inclusion for the Barrier Truncations}
\label{sec:final_proof}

We now prove the lower spectral inclusion for the operators $H_R$.
For eigenvalues, this follows quickly using \cite[Thm. 5.4]{bogli2020essential}, as explained in Proposition
\ref{prop:discrete}. Points of essential spectrum are the real novelty, and are covered in Proposition \ref{prop:essential}.

\begin{proposition}[Discrete spectrum]\label{prop:discrete} Let $\lambda$ be an eigenvalue of $H$. If $H$ enjoys the
unique continuation principle -- for instance, if $V\in L^p_{\mathrm{loc}}(\Omega)$ -- then for each sequence $R_n\nearrow +\infty$ 
there exists, for each $n\in\mathbb N$, an eigenvalue $\lambda_n$ of $H_{R_n}$ such that $\lambda_n\to \lambda$ as 
$n\to\infty$.
\end{proposition}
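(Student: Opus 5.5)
We apply the abstract result \cite[Thm.~5.4 i)]{bogli2020essential}. Roughly, it says the following: if $T_n\to T$ in generalized strong resolvent sense, and $\lambda$ is an isolated eigenvalue of $T$ lying outside the essential numerical range $W_{\mathrm e}((T_n)_n)$ of the sequence, then $\lambda$ is approximated by eigenvalues of $T_n$. The strategy is to check these hypotheses for $T=H$ and $T_n=H_{R_n}$, working with the shifted operators (so $0\notin$ the relevant sets, using $V\geq1$). Generalized strong resolvent convergence $H_{R_n}\to H$ is supplied by \cref{lem:sconverge}. A uniformly sectorial bound is also supplied by the construction: $h_R$ are restrictions of $h$, so the numerical ranges lie in a common sector. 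It then remains to locate $\lambda$ and to control the limiting essential numerical range.

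\textbf{Step 1: locate $\lambda$.} We show $\operatorname{Im}\lambda>0$. If $Hu=\lambda u$ with $\|u\|=1$, then
\[
\operatorname{Im}\lambda=\operatorname{Im} h[u,u]=\int_\Omega S|u|^2\geq0 .
\]
Equality would force $u=0$ a.e.\ on $\{S>0\}$, and then $Au=\lambda u$ with $\lambda$ real. By unique continuation this gives $u\equiv0$, provided $\{S>0\}$ has positive measure; the case $S=0$ a.e.\ is self-adjoint and classical. Unique continuation is where $V\in L^p_{\rm loc}$ enters, via \cite{JerisonKenig1985,SchechterSimon1980}; connectedness of $\Omega$ is used here. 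Also, $\lambda\in\sigma_{\rm d}(H)$ is isolated, since $\sigma_{\rm ess}(H)=\sigma_{\rm ess}(A)\subset\R$ by \cref{lem:forms}.

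\textbf{Step 2: show the limiting essential numerical range is real.} We show $W_{\mathrm e}((H_{R_n})_n)\subset\R\cup\{\infty\}$, or more precisely that it misses $\lambda$. Take $u_n\in\dom h_{R_n}$ with $\|u_n\|=1$, $u_n\rightharpoonup0$ and $h_{R_n}[u_n,u_n]\to z$ finite. Then $\|u_n\|_a$ is bounded, since $\operatorname{Re}h=a$. Write
\[
\int S|u_n|^2=\|\sqrt S A^{-1/2}(A^{1/2}u_n)\|^2 ,
\]
noting that the $u_n$ extend by zero into $\dom a$. Since $A^{1/2}u_n$ is bounded and $u_n\rightharpoonup0$, we get $A^{1/2}u_n\rightharpoonup0$ weakly. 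This uses density of $\dom A^{1/2}$ and the bound, as $\langle A^{1/2}u_n,\phi\rangle=\langle u_n,A^{1/2}\phi\rangle\to0$. Compactness of $\sqrt S A^{-1/2}$ (\cref{lem:cwikel}) then gives $\int S|u_n|^2\to0$, so $\operatorname{Im}z=0$. The same argument for the adjoints $H_{R_n}^*$, whose imaginary part is $-S$, gives the analogous statement, which is needed if the theorem requires $\lambda^*$ and the adjoint sequence. Hence the limiting essential numerical ranges are contained in $\R$ (together with the corresponding set at infinity in the sector). The point $\lambda$ lies at positive distance from them.

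\textbf{Step 3: conclude.} With Steps 1 and 2, \cite[Thm.~5.4 i)]{bogli2020essential} gives eigenvalues $\lambda_n\in\sigma(H_{R_n})$ with $\lambda_n\to\lambda$. The main obstacle is matching the precise hypotheses of that abstract theorem: the definition of $W_{\mathrm e}$ for a sequence of operators on varying spaces $L^2(\Omega_{R_n})$ via $J_R$, whether one must work with the (bounded) resolvents or the forms, and the treatment of the point at infinity. I would handle these by transferring everything to the bounded operators $J_RH_R^{-1}Q_R$ and $H^{-1}$. The compactness argument of Step 2 then shows the limiting essential numerical range of the resolvents lies in the image of $\R\cup\{\infty\}$ under $z\mapsto1/z$, which is a real segment. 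Since $\operatorname{Im}\lambda>0$, the point $1/\lambda$ is non-real, hence outside that segment.
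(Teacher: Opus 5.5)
Your proposal is correct and follows essentially the same route as the paper. Both arguments invoke \cite[Thm.~5.4]{bogli2020essential}, using the generalized strong resolvent convergence from \cref{lem:sconverge} for $H_{R_n}$ and, by the identical argument with $-S$, for $H_{R_n}^*$; both use unique continuation to get $\operatorname{Im}\lambda>0$, and both show that the limiting essential numerical range, which contains $\sigma_{\rm ess}((H_{R_n})_n)\cup\sigma_{\rm ess}((H_{R_n}^*)_n)^*$, is real. The only difference is that you prove $W_{\mathrm e}((H_{R_n})_n)\subseteq\mathbb R$ directly by the weak-compactness argument with $\sqrt S A^{-1/2}$, whereas the paper cites \cite[Prop.~5.7, Thm.~4.7]{bogli2020essential} to get $W_{\mathrm e}((H_{R_n})_n)\subseteq W_{\mathrm e}(H)=W_{\mathrm e}(A)\subseteq\mathbb R$ and handles the adjoint sequence via $J$-self-adjointness; your detour through the resolvents in Step 3 is unnecessary.
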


\begin{proof} 
This proposition is closely related to \cite[Theorem 7.1 iii)]{bogli2020essential} which covers operators of any even order; 
however for simplicity of exposition that result was written in a form that would require our potential $V$ and barrier $S$ 
to be bounded. We therefore explain why these hypotheses can be discarded in the current setting.

The underlying abstract result guaranteeing spectral inclusion is  \cite[Thm. 5.4]{bogli2020essential}, which establishes
visibility of all spectral points lying outside the union of the limiting essential spectra 
\[ \sigma_{\rm ess}((H_{R_n})_{n\in\mathbb N})\cup \sigma_{\rm ess}((H_{R_n}^*)_{n\in\mathbb N})^* \]
provided that $H_{R_n}\to H$ and  $H_{R_n}^*\to H^*$ in generalized strong resolvent sense. Lemma \ref{lem:sconverge} 
gives this convergence, so our result follows from the following:

{\bf (1)} Eigenvalues are not real: if $Hu=\lambda u$ then
\[ \Im(\lambda)\|u \|^2 = \int_\Omega S|u|^2 > 0, \]
with strict inequality due to non-triviality of $S\geq 0$ and the unique continuation principle \cite{JerisonKenig1985,SchechterSimon1980}. 

{\bf (2)} The set that we do not want $\lambda$ to lie in, $\sigma_{\rm ess}((H_{R_n})_{n\in\mathbb N})\cup \sigma_{\rm ess}((H_{R_n}^*)_{n\in\mathbb N})^*$, is real. To see this observe first that there is a mild simplification $\sigma_{\rm ess}((H_{R_n})_{n\in\mathbb N}) = \sigma_{\rm ess}((H_{R_n}^*)_{n\in\mathbb N})^*$ due to the $J$-selfadjointness of the $H_{R_n}$. Next, $\sigma_{\rm ess}((H_{R_n})_{n\in\mathbb N})$ is contained in the limiting essential numerical range $W_{\rm e}((H_{R_n})_{n\in\mathbb N})$. Because $\langle H_{R_n}u,u\rangle = h[u,u]$ for all $u\in \dom\, H_{R_n}$, the hypotheses of \cite[Prop. 5.7]{bogli2020essential} are satisfied so
\[ W_{\rm e}((H_{R_n})_{n\in\mathbb N})\subseteq W_{\rm e}(H); \]
but the essential numerical range $W_{\rm e}(H)$ coincides with $W_{\rm e}(A)$ by \cite[Theorem 4.7]{bogli2020essential} because $A^{-\frac{1}{2}}SA^{-\frac{1}{2}}$ is compact. Since $A=A^*$ we have, in summary,
\[ \sigma_{\rm ess}((H_{R_n})_{n\in\mathbb N})\cup \sigma_{\rm ess}((H_{R_n}^*)_{n\in\mathbb N})^* \subset W_{\rm e}((H_{R_n})_{n\in\mathbb N})\subseteq W_{\rm e}(H) = W_{\rm e}(A) \subseteq \mathbb R \not\ni\lambda. \]
The proof is complete.
\end{proof}

\begin{proposition}[Essential spectrum]\label{prop:essential}
If $\lambda\in\sigma_{\rm ess}(H)$, then every open neighborhood $U$ of $\lambda$ meets $\sigma(H_R)$ for all sufficiently large 
$R>0$.
\end{proposition}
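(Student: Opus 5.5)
We argue by contradiction, working at the level of inverses so that \cref{gil_consequence} applies to compact operators. Since $\sigma_{\rm ess}(H)=\sigma_{\rm ess}(A)\subset[1,\infty)$ by \cref{lem:forms} and $V\geq1$, the point $\lambda$ is real and $\lambda\geq1$. Suppose the conclusion fails. Then there are a neighbourhood $U$ of $\lambda$, which we may shrink to a disc $D(\lambda,\delta)$ with $\delta<\lambda/2$, and a sequence $R_n\nearrow\infty$ such that $\sigma(H_{R_n})\cap D(\lambda,\delta)=\emptyset$ for all $n$. We pass to the inverses $B_n=A_{R_n}^{-1}$ and $C_n=H_{R_n}^{-1}$. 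These are compact by \cref{lem:kr}, $B_n$ is self-adjoint, and $C_n-B_n=K_{R_n}$ satisfies $\sup_n\|K_{R_n}\|_{\Sp_r}\leq M<\infty$ for a fixed $r>\max(p,1)$.

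Under $z\mapsto 1/z$, the disc $D(\lambda,\delta)$ is mapped onto a set containing a disc $D(1/\lambda,\delta')$ with $\delta'>0$ depending only on $\lambda,\delta$. Because $\sigma(C_n)\setminus\{0\}=\{1/\mu:\mu\in\sigma(H_{R_n})\}$, the spectrum of $C_n$ avoids $D(1/\lambda,\delta')$. We now choose $J=[1/\lambda-\delta'/2,\,1/\lambda+\delta'/2]$, shrinking $\delta'$ if needed so that $J$ stays at positive distance from $0$. This gives
\[
\eta=\operatorname{dist}(J,\sigma(C_n)\cup\{0\})\geq\eta_0>0,
\]
with $\eta_0$ independent of $n$. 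Applying \eqref{eq:counting-gil} then yields
\[
\operatorname{rank}{\bf 1}_J(B_n)\leq (2+2b_r)^r M^r\eta_0^{-r},
\]
a bound that is uniform in $n$.

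On the other hand, $\operatorname{rank}{\bf 1}_J(B_n)=\operatorname{rank}{\bf 1}_I(A_{R_n})$, where $I=\{1/t:t\in J\}$ is a closed interval with $\lambda$ in its interior. By \eqref{eq:count} in \cref{lem:sconverge}, applied to an open interval inside $I$ containing $\lambda\in\sigma_{\rm ess}(A)$, this rank tends to $+\infty$. That contradicts the uniform bound, so every neighbourhood of $\lambda$ meets $\sigma(H_R)$ for all sufficiently large $R$.

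The main point needing care is that \eqref{eq:count} is stated for $R\to\infty$ along the whole family, while we need it along the subsequence $R_n$. This is harmless, since divergence along the family implies divergence along every subsequence. The other point to check is that the constant in \eqref{eq:counting-gil} stays uniform, which requires $\eta$ to be bounded below independently of $n$. That holds because $\sigma(C_n)$ avoids the fixed disc and $J$ is fixed and separated from $0$. Everything else is bookkeeping of the spectral mapping $z\mapsto1/z$ for the $m$-sectorial operators $H_{R_n}$, which have compact resolvent.
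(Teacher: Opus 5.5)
Your proof is correct and follows essentially the same route as the paper. It argues by contradiction along a bad sequence $R_n$, passes to $B_n=A_{R_n}^{-1}$ and $C_n=H_{R_n}^{-1}$ with the uniform $\Sp_r$ bound of \cref{lem:kr}, applies \eqref{eq:counting-gil} with an $n$-independent separation $\eta$, and contradicts this with the divergence of $\operatorname{rank}{\bf 1}_I(A_{R_n})$ from \eqref{eq:count}. The only difference is cosmetic: you get the separation by inverting a small disc around $\lambda$, whereas the paper uses the numerical-range region $\Sigma$ and compactness on the Riemann sphere.
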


\begin{proof}
By \cref{lem:forms}, $ \sigma_{\rm ess}(H)=\sigma_{\rm ess}(A)$. Thus $\lambda\in\sigma_{\rm ess}(A)\subset[1,\infty)$, 
in particular, $\lambda\geq 1$. Suppose, for a contradiction, the assertion is false. Then there exist $R_n\to\infty$ such that $\sigma(H_{R_n})\cap U=\emptyset$. Choose a bounded open interval $I$ with
$$
        \lambda\in I,\qquad \overline I\subset U\cap(0,\infty).
$$
Let $\Phi(z)=z^{-1}$ with $\Phi(\infty)=0$, and set
$$
        \mathcal J=\Phi(\overline I),\qquad
				\Sigma=\{z\in{\mathbb C}:\operatorname{Re}z\geq 1,\ \operatorname{Im}z\geq0\}.
$$
Since the numerical range of $H_{R_n}$ is contained in $\Sigma$, so is $\sigma(H_{R_n})$. The pole of $\Phi$ at $z=0$ lies outside both $\Sigma$ and $\overline I$. Hence, $\overline I$ and $(\Sigma\setminus U)\cup\{\infty\}$ are disjoint compact sets of $\mathbb{C}\cup\{\infty\}$, and their images under $\Phi$ are separated:
\begin{equation}\label{eq:ess-separation}
        \eta:=\operatorname{dist}\bigl(\mathcal J,\Phi(\Sigma\setminus U \cup\{\infty\})\bigr)>0 .
\end{equation}
Let $B_n=A_{R_n}^{-1}$ and $C_n=H_{R_n}^{-1}$. Since $\sigma(H_{R_n})\cap U=\emptyset$, spectral mapping gives
$$
        \sigma(C_n)\cup\{0\}\subset \Phi((\Sigma\setminus U)\cup\{\infty\}),
$$
and therefore $\operatorname{dist}(\mathcal J,\sigma(C_n)\cup\{0\})\geq\eta$.

Choose $r>p$. By \cref{lem:kr}, $\sup_n\|B_n-C_n\|_{\Sp_r}<\infty$. Let
$$
        N_n=\rank{\bf 1}_{\mathcal J}(B_n).
$$
Since $\mathcal J=\Phi(\overline I)$ and $0\notin\mathcal J$, spectral mapping for the self-adjoint resolvent gives
\begin{equation}\label{eq:ess-rank-lower}
        N_n\geq \rank{\bf 1}_I(A_{R_n}).
\end{equation}
By \cref{lem:sconverge}, the right-hand side of \cref{eq:ess-rank-lower} tends to infinity, since $\lambda\in\sigma_{\rm ess}(A)$. 
On the other hand, \cref{gil_consequence} gives
$$
        N_n\eta^r\leq (2+2b_r)^r\sup_{n}\|B_n-C_n\|_{\Sp_r}^r<\infty .
$$
This is a contradiction. Hence no bad sequence of radii exists.
\end{proof}

\begin{proof}[Proof of \cref{thm:main}]
The isolated spectral points are covered by \cref{prop:discrete}, while the essential spectrum is covered by \cref{prop:essential}.
\end{proof}

The theorem is a guarantee for the domain-truncation stage of the dissipative barrier method. The next example illustrates this guarantee after discretizing the finite-domain problems: as the computational radius grows, spectral points of the truncated dissipative operators appear near the target spectrum rather than disappearing into the barrier.

\section{Numerical Example in Two Dimensions}
\label{sec:num_example}

The following computation illustrates \cref{thm:main}. We use a two-dimensional periodic Schr\"odinger operator with a localized defect: the self-adjoint part has band spectrum and a defect state in the first gap, while the dissipative finite sections recover the real bands and lift the defect state into the upper half-plane.

\subsection{The operator}
Let
$$
        V_{\rm per}(x,y)=\frac{1}{2}(2-\cos x-\cos y),\qquad
        \Upsilon(x,y)=\frac{1}{2}\exp(-(x^2+y^2)/1.8^2),
$$
and choose the dissipative barrier as follows. With $\rho=(x^2+y^2)^{1/2}$, set
$$
\chi(\rho)=\xi((\rho-8)/2),\qquad
\xi(t)=
\begin{cases}
0, & t\leq 0,\\[2mm]
\dfrac{\exp(-1/t)}
{\exp(-1/t)+\exp(-1/(1-t))}, & 0<t<1,\\[3mm]
1, & t\geq 1 .
\end{cases}
$$
We define the barrier
$$
        S(x,y)=\frac{1}{5}[1-\chi(\rho)+\chi(\rho)(1+\rho^2)^{-3/4}].
$$
Thus $S=0.2$ on $\rho\leq8$, while $S=0.2(1+\rho^2)^{-3/4}$ on $\rho\geq10$. Note that $S\geq0$, $S\in L^2(\mathbb R^2)$, and $S\notin L^1(\mathbb R^2)$. This choice deliberately illustrates the
$L^2(\mathbb R^2)\setminus L^1(\mathbb R^2)$ general assumptions covered by
\cref{thm:main}.

We consider
$$
        H=-\Delta+V_{\rm per}+\Upsilon+iS
        \qquad\hbox{on }L^2(\mathbb R^2).
$$
Set
\[
 L=-\frac{\mathrm d^2}{\mathrm dt^2}
      +\frac12(1-\cos t),
 \qquad
 A_{\rm per}=-\Delta+V_{\rm per}.
\]
Then
\[
 A_{\rm per}=L\otimes I+I\otimes L,
 \qquad
 \sigma(A_{\rm per})=\sigma(L)+\sigma(L).
\]
A calculation of the periodic and antiperiodic Mathieu eigenvalues
gives the first two one-dimensional bands
\[
 J_1=[0.386215349\ldots,0.472437796\ldots],
 \qquad
 J_2=[0.964777018\ldots,1.479256193\ldots].
\]
The pairwise band sums therefore give $\sigma(A_{\rm per})
   =\mathcal I_1\cup\mathcal I_2$ with
\[\mathcal I_1=[0.772430698\ldots,0.944875592\ldots],
 \qquad
 \mathcal I_2=[1.350992367\ldots,\infty).
\]
Since $\Upsilon$ and $S$ are bounded and tend to zero at infinity,
they are relatively compact perturbations of $A_{\rm per}$, and hence
\[
 \sigma_{\rm ess}(H)=\sigma(A_{\rm per})
                    =\mathcal I_1\cup\mathcal I_2.
\]
The parameters in $\Upsilon$ are chosen so that the self-adjoint
operator
\[
 A=A_{\rm per}+\Upsilon
\]
has a localized defect eigenstate in the first gap
$(0.944875592\ldots,1.350992367\ldots)$, as confirmed numerically below.

\subsection{Domain truncation and further discretisation}

For the numerical computations we specialise the domain truncations to the
squares
\[
        \Omega_R=(-R,R)^2,
        \qquad R\in\{10,12,14,16,18,20\},
\]
with homogeneous Dirichlet conditions on $\partial\Omega_R$.  For
$
        \delta\in\{\frac12,\frac14,\frac18,\frac1{16}\},
$
we partition $\Omega_R$ into axis-parallel squares of side length $\delta$,
insert the centre of every square, and join it to the four vertices.  This
gives a nested, shape-regular criss-cross triangulation
$\mathcal T_{R,\delta}$ satisfying
$
        \max_{T\in\mathcal T_{R,\delta}}\operatorname{diam}(T)=\delta .
$
Let $X_{R,\delta}\subset H^1_0(\Omega_R)$ be the corresponding conforming
space of continuous piecewise affine functions.  The Galerkin eigenproblem
is to find $z\in\mathbb C$ and $0\ne u_\delta\in X_{R,\delta}$ such that
\[
 \int_{\Omega_R}
 \left(
   \nabla u_\delta\cdot\overline{\nabla v_\delta}
   +(V_{\rm per}+\Upsilon+iS)u_\delta\overline{v_\delta}
 \right)
 =z\int_{\Omega_R}u_\delta\overline{v_\delta},
 \qquad v_\delta\in X_{R,\delta}.
\]
We denote the resulting finite-dimensional Galerkin operator by
$H_{R,\delta}$. To indicate the scale of the computation, for $R=20$ and
$\delta=1/16$ the mesh contains $1\,638\,400$ triangles and the
unreduced Galerkin problem has $817\,921$ interior nodal degrees of
freedom.

For fixed $R$, convergence as $\delta\to0$ follows from standard
non-self-adjoint Galerkin theory.  After adding a fixed positive shift, the
primal and adjoint source problems are coercive on $H^1_0(\Omega_R)$.  Since
$\Omega_R$ is a convex polygon and the coefficients are bounded, their
solution operators map $L^2(\Omega_R)$ boundedly into
$H^2(\Omega_R)\cap H^1_0(\Omega_R)$.  C\'ea's lemma and the piecewise affine
interpolation estimate give operator-norm convergence of the primal and
adjoint Galerkin solution operators.  The Osborn--Babu\v{s}ka spectral
approximation theorem therefore gives convergence to every isolated
eigenvalue of $H_R$, with algebraic multiplicity
\cite{osborn1975spectral,babuvska1991eigenvalue}.

The mass and stiffness matrices were integrated exactly.  The matrices
containing $V_{\rm per}+\Upsilon$ and $S$ were assembled using a
positive-weight triangle quadrature rule of order $10$.  At $R=20$, replacing
this rule by one of order $14$ changed the computed spectral set by
$2.7\times10^{-10}$ in Hausdorff distance on the coarsest mesh, and by
$1.61\times10^{-13}$ for $\delta=1/4$.  Thus quadrature error is negligible
compared with the discretisation error below.

For each $R$ and $\delta$ we computed the spectral set
\[
        \Sigma_{R,\delta}
        :=\{z\in\sigma(H_{R,\delta}):\operatorname{Re}z<1.3\}.
\]
For every discrete eigenvalue,
$\operatorname{Re}z\geq0$ and $0\leq\operatorname{Im}z\leq0.2$,
so the target set lies in a known compact rectangle $\{z:0\leq\operatorname{Re}z\leq1.3,0\leq\operatorname{Im}z\leq0.2\}$. We covered this
rectangle by two overlapping disk searches.  Each search retained at least
six exterior guard eigenvalues and was repeated with a larger Krylov space
and an independent starting vector.  The different searches agreed to within
$6\times10^{-14}$, and the largest relative eigenpair residual was
$6\times10^{-13}$.

Let $N_{R,\delta}$ denote the number of eigenvalues in $\Sigma_{R,\delta}$,
counted with algebraic multiplicity.  The counts
\[
\begin{array}{c|rrrrrr}
R  & 10 & 12 & 14 & 16 & 18 & 20\\
\hline
N_{R,\delta}  & 9 & 9 & 25 & 25 & 25 & 45
\end{array}
\]
were unchanged over all four values of $\delta$.  The finest-mesh spectral
sets are displayed in
\cref{fig:adaptive-spectra}.

\begin{figure}[t]
\centering
\includegraphics[width=\textwidth]{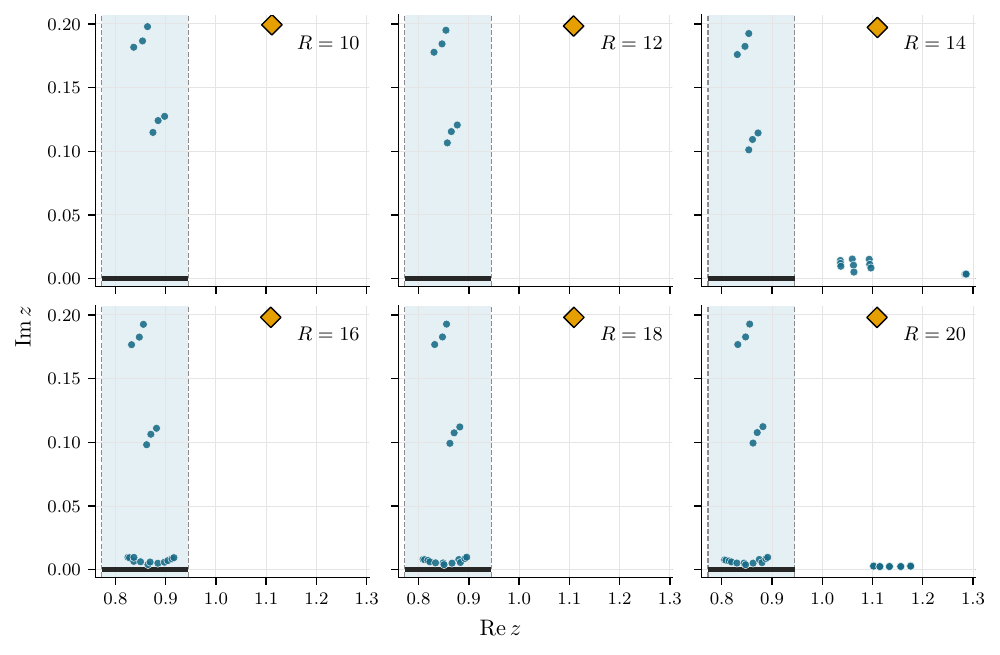}
\caption{The Galerkin eigenvalues with $\operatorname{Re}z<1.3$ for
$R=10,12,14,16,18,20$, computed on the finest mesh $\delta=1/16$.
All panels use the same axes.  The lightly shaded strip marks the real-part
window of the first periodic band $\mathcal I_1$, the black segment marks the
band itself, and the diamond marks the dissipative lift of the localised
defect eigenvalue.  Algebraic multiplicities were retained in the
computations, although coincident eigenvalues are displayed as one point.}
\label{fig:adaptive-spectra}
\end{figure}

To display convergence without treating the finest computed spectrum as an
exact reference, we use the successive refinement discrepancy
\[
 E_R(\delta)
   :=d_{\rm H}\bigl(\Sigma_{R,\delta},
                     \Sigma_{R,\delta/2}\bigr),
 \qquad
 \delta\in\left\{\frac12,\frac14,\frac18\right\}.
\]
Here $d_{\rm H}$ is the ordinary Hausdorff distance between finite subsets of
$\mathbb C$; it does not encode multiplicity.  As shown in
\cref{fig:mesh-convergence}, halving $\delta$ reduces every refinement
discrepancy by approximately a factor of four, consistent with quadratic convergence
of the computed eigenvalues.

\begin{figure}[t]
\centering
\includegraphics[width=.84\textwidth]{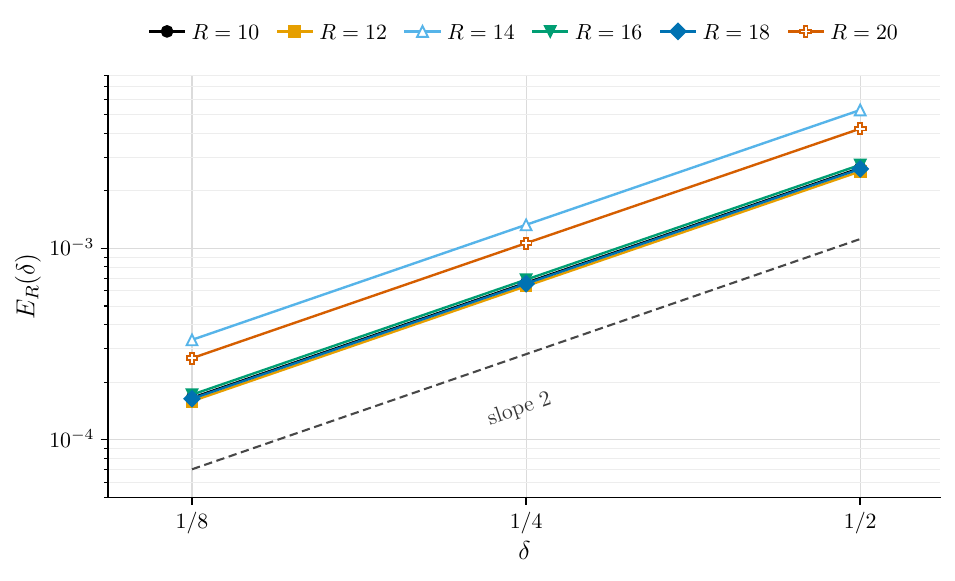}
\caption{Successive refinement discrepancies
$E_R(\delta)=d_{\rm H}(\Sigma_{R,\delta},\Sigma_{R,\delta/2})$ for
$R=10,12,14,16,18,20$.  The dashed reference line has slope $2$.}
\label{fig:mesh-convergence}
\end{figure}

\subsection{Lifted gap eigenvalue}

For each $R\in\{10,12,14,16,18,20\}$, let
$z_R^{\mathrm{def}}:=z_{R,1/16}^{\mathrm{def}}$ denote the lifted
defect eigenvalue marked by the diamond in
\cref{fig:adaptive-spectra}.  The values are shown in
\cref{tab:adaptive-gap-lift}.  For $R=16,18,20$ their variation is at
the $10^{-4}$ scale, comparable with the change between the two finest
meshes at $R=20$.  Thus the computations stabilise near
$
        z^{\mathrm{def}}\approx 1.1091+0.1981i.
$

\begin{table}[t]
\centering
\caption{The lifted defect eigenvalue
$z_R^{\mathrm{def}}=z_{R,1/16}^{\mathrm{def}}$.  The final column
measures the difference from the $R=20$ value and is a truncation
diagnostic rather than an error estimate.}
\begin{tabular}{cccc}
\toprule
$R$ & $\operatorname{Re}z_R^{\mathrm{def}}$
    & $\operatorname{Im}z_R^{\mathrm{def}}$
    & $\lvert z_R^{\mathrm{def}}-z_{20}^{\mathrm{def}}\rvert$\\
\midrule
10 & 1.111381367 & 0.199331613 & $2.63596\times10^{-3}$\\
12 & 1.108452795 & 0.198318990 & $6.53013\times10^{-4}$\\
14 & 1.109769649 & 0.197200077 & $1.13069\times10^{-3}$\\
16 & 1.109121343 & 0.198159111 & $9.86821\times10^{-5}$\\
18 & 1.109031919 & 0.198112127 & $4.24879\times10^{-5}$\\
20 & 1.109060928 & 0.198081084 & $0$\\
\bottomrule
\end{tabular}
\label{tab:adaptive-gap-lift}
\end{table}

To identify the self-adjoint state underlying this eigenvalue, let
$A_{R,\delta}$ denote the Galerkin operator obtained by deleting the term
$iS$ from $H_{R,\delta}$.  We performed targeted solves for
$A_{20,1/8}$ and $H_{20,1/8}$ on the same mesh.  They gave
\[
 \lambda_{20,1/8}^{\mathrm{def}}=1.109388494,
 \qquad
 z_{20,1/8}^{\mathrm{def}}
       =1.109158398+0.198078276i,
\]
and hence
\[
 \bigl|\operatorname{Re}z_{20,1/8}^{\mathrm{def}}
          -\lambda_{20,1/8}^{\mathrm{def}}\bigr|
       =2.30\times10^{-4}.
\]
The dissipative eigenvalue on this mesh differs from
$z_{20,1/16}^{\mathrm{def}}$ by $9.75\times10^{-5}$.  We use
$\delta=1/8$ for the mode comparison because it therefore resolves the
eigenvalue adequately while leaving the triangulation visible in the plot.

Let $u_0$ be the most localised self-adjoint gap eigenfunction found by this
solve, and let $u_S$ be the dissipative defect eigenfunction.  We normalise
both to have unit $L^2(\Omega_{20})$ norm, choose $u_0$ real, and multiply
$u_S$ by a unimodular constant so that
$
        \langle u_0,u_S\rangle_{L^2(\Omega_{20})}>0.
$
After this phase alignment,
\[
 \bigl|\langle u_0,u_S\rangle_{L^2(\Omega_{20})}\bigr|=0.9439.
\]
The respective fractions of $L^2$ mass in $\{\rho\leq8\}$ are
$0.8710$ and $0.9758$.  After restriction to this disk and
renormalisation, the overlap is $0.9997$.  Thus the localised cores are
nearly identical; the lower global overlap reflects a
truncation-induced exterior component of the self-adjoint mode.

Taking imaginary parts of the normalised Galerkin eigenvalue identity gives
\[
 \operatorname{Im}z_{20,1/8}^{\mathrm{def}}
   =\int_{\Omega_{20}}S|u_S|^2
   =0.198078276.
\]
Since $S=0.2$ on $\rho\leq8$, the concentration of the dissipative mode in
this region explains its nearly vertical displacement by $0.2i$ and the
close agreement of the localised profiles shown in \cref{fig:gap-modes}.

\begin{figure}[t]
\centering
\includegraphics[width=\textwidth]
  {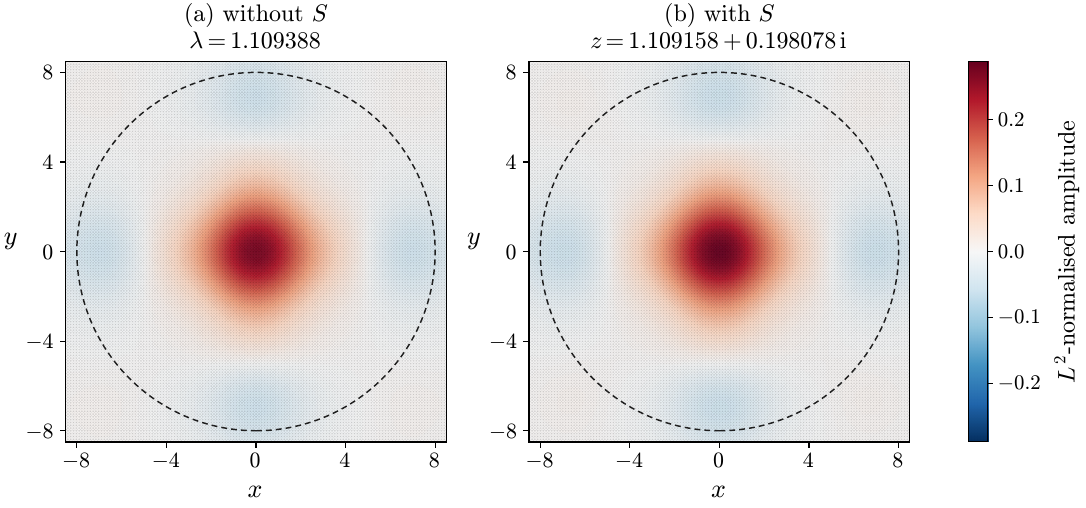}
\caption{The localised defect mode without and with the dissipative barrier,
computed on $\Omega_{20}$ with $\delta=1/8$.  Left: the real self-adjoint
eigenfunction $u_0$.  Right: $\operatorname{Re}u_S$, after phase alignment
with $u_0$.  Both eigenfunctions have unit $L^2(\Omega_{20})$ norm and the
panels use the same symmetric colour scale.  The grey lines show the
criss-cross finite element mesh and the dashed circle marks $\rho=8$, inside
which $S=0.2$.  Only the window $[-8.5,8.5]^2$ is displayed; the computation
used the full domain $\Omega_{20}$.}
\label{fig:gap-modes}
\end{figure}

\subsection{Absence of spectral invisibility}

For the square exhaustion $\Omega_R=(-R,R)^2$, the hypotheses of
\cref{thm:main} hold with $p=2$.  Consequently,
\[
 \mathcal I_1\cup\mathcal I_2
 =\sigma_{\rm ess}(H)
 \subseteq\sigma(H)
 \subseteq\liminf_{R\to\infty}\sigma(H_R).
\]
This concerns the continuous finite-domain operators.  Combining it with the
fixed-domain Galerkin convergence described above gives the corresponding
fully discrete conclusion: for every $E\in\mathcal I_1$ and
$\varepsilon>0$, there is $R_0$ such that, for each $R\geq R_0$,
\[
        \dist\bigl(E,\sigma(H_{R,\delta})\bigr)<\varepsilon
\]
once $\delta$ is sufficiently small.  The required mesh size may depend on
$R$.

To quantify the recovery of the first band in our computations, we use the
finest mesh and define the directed distance
\[
 D_R:=\sup_{E\in\mathcal I_1}
        \dist\bigl(E,\Sigma_{R,1/16}\bigr).
\]
Thus $D_R$ is the worst-case distance in $\mathbb C$ from the full interval
$\mathcal I_1$ to the complete computed spectral set in the window
$\operatorname{Re}z<1.3$.  The supremum is taken over the interval itself,
not over a sampling grid.

\begin{table}[t]
\centering
\caption{Directed distance from the first periodic band to the complete
finest-mesh spectral set, with $\delta=1/16$.}
\begin{tabular}{cc}
\toprule
$R$ & $D_R$\\
\midrule
10 & 0.153697\\
12 & 0.138050\\
14 & 0.129772\\
16 & 0.053431\\
18 & 0.050053\\
20 & 0.054368\\
\bottomrule
\end{tabular}
\label{tab:first-band-distance}
\end{table}
The worst-case distance therefore decreases by approximately a factor of
three over the displayed range of truncations.  The values need not be
monotone in $R$: in each row the maximum is attained at one endpoint of
$\mathcal I_1$, and the small increase from $R=18$ to $R=20$ reflects the
finite-volume placement of the nearest eigenvalue at that endpoint.  By
comparison, the successive-refinement distances between $\delta=1/8$ and
$\delta=1/16$ are at most $3.4\times10^{-4}$ in
\cref{fig:mesh-convergence}.

\section{Conclusion}

We have resolved the graveyard problem for dissipative Schr\"odinger
operators in dimensions $d\geq2$.  For
$
        H=-\Delta+V+iS,
$
with $V$ real and bounded below and with $S\geq0$ satisfying the natural
Sobolev-scale integrability assumptions, every nested bounded Dirichlet
exhaustion $\Omega_R\nearrow\Omega$ satisfies
$
        \sigma(H)
        \subseteq
        \liminf_{R\to\infty}\sigma(H_R).
$
Thus every neighborhood of every genuine spectral point of $H$ meets
$\sigma(H_R)$ for all sufficiently large $R$.  The result includes the
critical exponent $S\in L^{d/2}(\Omega)$ for $d\geq3$ and every
$S\in L^p(\Omega)$ with $p>1$ for $d=2$.  Together with the earlier
one-dimensional theorem \cite{marletta2014finite}, this settles the
no-invisibility question in all dimensions, within the corresponding
hypotheses.

The essential-spectrum case is the heart of the argument.  Generalized
strong resolvent convergence forces the self-adjoint truncations $A_R$ to
have increasingly many eigenvalues near each point of
$\sigma_{\rm ess}(A)$.  Cwikel-type estimates give a Schatten bound for
$H_R^{-1}-A_R^{-1}$ that is uniform in $R$, while the reversed
Hansmann--Weyl inequality prevents a uniformly controlled non-self-adjoint
perturbation from moving all of these eigenvalues away.  Spectral
invisibility is thereby converted into an eigenvalue-counting
contradiction.  This mechanism avoids the diagonal resolvent-kernel
estimates that obstruct the higher-dimensional extension of the original
one-dimensional proof, and replaces the variational arguments that are
unavailable for non-normal operators.

The theorem has a precise computational interpretation.  The
continuous domain-truncation stage cannot lose spectral points of the
dissipative target operator.  For each fixed computational domain,
standard Galerkin spectral approximation then controls the subsequent
finite-dimensional discretization.  The two-dimensional example
illustrates this separation of effects: a barrier lifts the localized gap state
into the upper half-plane, while the finite-domain spectra continue to
recover the first periodic band.  The observed mesh-refinement behavior
shows that both features are resolved by the finite element
discretization.

\medskip
\noindent\textbf{AI declaration.} Consistent with the Leiden Declaration on Artificial Intelligence \cite{LeidenDeclaration2026} and journal policy, we disclose all use of AI in this paper. As described in \cref{sec:route}, AI was used as an initial search aid. Its outputs were incorrect, but helped point us towards \cite{MR4764540,MR4722454}. No AI was used to produce the mathematical arguments or paper content, nor to draft the paper beyond minor language editing of human-written drafts. We take full responsibility for all content.

\medskip
\noindent\textbf{Acknowledgements.} The authors thank the Isaac Newton Institute for the Mathematical Sciences,
Cambridge, for support and hospitality during the {\em Geometric Spectral Theory and Applications} programme, where work on this paper was undertaken. This work was supported
by EPSRC grant EP/Z000580/1.

\bibliographystyle{abbrv}
\bibliography{pde_mcom}

\end{document}